\newcommand{\mP}{\mathrm{P}}
\newcommand{\mQ}{\mathrm{Q}}
\newcommand*{\bfcdot}{\scalebox{0.75}{$\bullet$}}
\newcommand{\Z}{\ensuremath{\mathbb{Z}}}
\newcommand{\Q}{\ensuremath{\mathbb{Q}}}
\newcommand{\C}{\ensuremath{\mathbb{C}}}
\newcommand{\p}{\ensuremath{\mathfrak{p}}}
\newcommand{\q}{\ensuremath{\mathfrak{q}}}
\newcommand{\n}{\ensuremath{\mathfrak{n}}}
\newcommand{\too}{\longrightarrow}								
\newcommand{\mapstoo}{\longmapsto}
\newcommand{\ontoo}{\relbar\joinrel\twoheadrightarrow}
\newcommand{\into}{\hookrightarrow}
\newcommand{\onto}{\twoheadrightarrow}
\newcommand{\cf}{\mathbbm{1}}
\newcommand{\dd}{\textnormal{d}}
\DeclareMathOperator{\Hom}{Hom}
\DeclareMathOperator{\Sym}{Sym}
\DeclareMathOperator{\Ten}{T}
\DeclareMathOperator{\Gal}{Gal}
\DeclareMathOperator{\id}{id}
\DeclareMathOperator{\No}{N}
\DeclareMathOperator{\Ind}{Ind}
\DeclareMathOperator{\GL}{GL}
\DeclareMathOperator{\rec}{rec}
\DeclareMathOperator{\pr}{pr}
\newcommand{\an}{\mathrm{an}}
\newcommand{\alg}{\mathrm{alg}}
\newcommand{\Tate}{\mathrm{Tate}}
\DeclareMathOperator{\Ad}{Ad}
\DeclareMathOperator{\add}{add}
\DeclareMathOperator{\nm}{nm}
\DeclareMathOperator{\res}{\operatorname{res}}
\renewcommand{\det}{\operatorname{det}}
\DeclareMathOperator{\ord}{ord}
\def\XXint#1#2#3{{\setbox0=\hbox{$#1{#2#3}{\int}$ }
\vcenter{\hbox{$#2#3$ }}\kern-.585\wd0}}
\newcommand{\exterior}[1]{\mathop{\mathpalette\exterior@{#1}}}
\newcommand{\exterior@}[2]{%
  % raise to the baseline
  \raisebox{\depth}{%
  % select the script size relative to the current font
  \fontsize{\sf@size}{0}%
  % nullify the math surround
  \m@th
  % typeset \bigwedge, but using \textstyle in displays
  $\ifx#1\displaystyle\textstyle\else#1\fi\bigwedge$}%
  % the exponent, a bit nearer to the symbol
  ^{\mspace{-2mu}#2}%
  % remove the \scriptspace
  \kern-\scriptspace
}
\theoremstyle{plain}
\newtheorem{theorem}{Theorem}[section]
\newtheorem{lemma}[theorem]{Lemma}
\newtheorem{proposition}[theorem]{Proposition}
\newtheorem{corollary}[theorem]{Corollary}
\newtheorem{conjecture}[theorem]{Conjecture}
\newtheorem{thmx}{Theorem}
\theoremstyle{definition}
\newtheorem{remark}[theorem]{Remark}
\newtheorem{assumptions}[theorem]{Assumption}
\newtheorem{definition}[theorem]{Definition}
\begin{document}

\title{On the algebraicity of polyquadratic plectic points}

\author{Michele Fornea}
\email{mfornea@math.columbia.edu}
\address{Columbia University, New York, USA.}

\author{Lennart Gehrmann}
\email{lennart.gehrmann@uni-due.de}
\address{Universität Duisburg-Essen, Essen, Germany.}

\classification{11F41, 11F67, 11G05, 11G40.}

\begin{abstract}
We establish direct evidence of the arithmetic significance of \emph{plectic Stark--Heegner points} for elliptic curves of arbitrarily large rank.
The main contribution is a proof of the algebraicity of plectic points associated to polyquadratic CM extensions of totally real number fields.
Moreover, we relate the non-vanishing of plectic points to analytic and algebraic ranks of elliptic curves. 
\end{abstract}

\maketitle

\tableofcontents

%%%%%%%%%%%%%%%%%%%%%%%%%%%%%%%%%%%%%%%%%%%%%%%%%%%%%%
% Introduction
%%%%%%%%%%%%%%%%%%%%%%%%%%%%%%%%%%%%%%%%%%%%%%%%%%%%%%

\section{Introduction}
In previous work, \emph{plectic Stark--Heegner points} were associated to quadratic extensions $E/F$ of number fields and modular elliptic curves $A_{/F}$ under some technical assumptions.
The construction generalizes the description of classical Heegner points on Shimura curves admitting a $p$-adic uniformization, and combines Nekov\'a$\check{\text{r}}$--Scholl's plectic insights (\cite{PlecticNS}, \cite{NekRubinfest}) with Darmon's pioneering work on Stark-Heegner points \cite{IntegrationDarmon}.
Moreover, Conjectures 1.3, 1.5 of \cite{plecticHeegner} predict the algebraicity of plectic Stark--Heegner points and their significance for elliptic curves of higher rank, for which
some numerical evidence was provided in \cite{PlecticInvariants}.
The aim of this paper is to establish direct evidence for the aforementioned conjectures in the polyquadratic CM case.

\subsection{Conjectures on plectic Stark--Heegner points}\label{conjectures}
Even though the formulation of the conjectures does does not require any restriction on the possible signatures of the fields $E$ and $F$, we only consider CM extensions in this article; that is, $F$ is a totally real number field and $E/F$ a is totally complex quadratic extension.
All prime divisors of the conductor $\mathfrak{f}$ of the elliptic curve $A$ are assumed to be unramified in $E/F$.
We fix a rational prime $p$ and a set $S=\{\p_1,\ldots,\p_r\}$ of $p$-adic primes of $F$ such that
\begin{itemize}
\item $A_{/F}$ has multiplicative reduction at every $\p \in S$,
\item the primes in $S$ are all inert in $E$. 
\end{itemize}
Given $\p\in S$ we also denote by $\p$ the unique prime of $E$ above $\p$ .
Furthermore, we let $a_\p\in\{\pm1\}$ equal $+1$ (resp.~$-1$) when $A$ has split (resp.~non-split) multiplicative reduction at $\p$.
The quantity $a_\p$ is closely related to the local root number $\varepsilon_{\p}(A/F)$ of $A$:
\[
a_\p=-\varepsilon_{\p}(A/F).
\]
By setting $p_S:=\p_1\cdots\p_r$, we may write
\[
\mathfrak{f}=p_S\cdot \n^{\sharp}\cdot \n^{\flat},
\]
for coprime ideals $\n^{\sharp}, \n^{\flat}\in \mathcal{O}_F$ such that
$\n^{\sharp}$ is divisible by \emph{every} prime divisor of $\mathfrak{f}$ split in $E/F$.
\begin{assumptions}[(Plectic Heegner hypothesis for $(A,E,S)$)]\label{AssHeegner2}
  We require that
	\begin{itemize}
		\item $\n^{\flat}$ is square-free,
		\item the number of prime factors of $\n^{\flat}$ is congruent to $[F:\Q]$ modulo $2$. 
	\end{itemize}
\end{assumptions}
\noindent Since $E/F$ is a CM extension, Assumption \ref{AssHeegner2} implies that the sign of the functional equation of the $L$-function $L(A/E,s)$ of the base-change of $A_{/F}$ to $E$ is equal to $\varepsilon(A/E)=(-1)^{r}$.

\vspace{1.5mm}
\noindent  We introduce the following useful notations.
First, if $H$ is any commutative prodiscrete group, we denote by $\widehat{H}$ the torsion-free part of its pro-$p$ completion.
Second, if $M$ is an abelian group and $\Omega$ a field of characteristic zero, we use $M_\Omega$ and $\exterior{r}(M_\Omega)$ to respectively denote the tensor product $M\otimes_\Z \Omega$, and the $r$-th exterior power of the $\Omega$-vector space $M_\Omega$.

\vspace{1.5mm}
\noindent Consider the tensor product
\[
\widehat{A}(E_{S}):=\widehat{A}(E_{\p_1}) \otimes_{\Z_p} \ldots \otimes_{\Z_p} \widehat{A}(E_{\p_r}).
\]
Under the running assumptions, the plectic Stark--Heegner point (for the trivial character) is
\[
\mP_{A,S}\in\widehat{A}(E_{S})_\Q
\]
as constructed in (\cite{plecticHeegner}, Section 4.4), where it was denoted $\mP^{\cf}_{A}$.
The notion of algebraicity for plectic Stark--Heegner points is formulated in terms of a determinant map.
Writing $\iota_\mathfrak{p}\colon E\hookrightarrow E_\mathfrak{p}$ for the canonical embedding for every $\mathfrak{p}\in S$, we can consider the homomorphism
\[
\det \colon\exterior{r}(A(E)_\Q)\too \widehat{A}(E_{S})_\Q\\
\]
 given by
\[
\det\big(P_1\wedge\dots\wedge P_r\big)=\det \begin{pmatrix}
		\iota_{\mathfrak{p}_1}(P_1)&\dots& \iota_{\mathfrak{p}_r}(P_1)\\
	\vdots&\ddots&\vdots\\
	\iota_{\mathfrak{p}_{1}}(P_r)&\dots& \iota_{\mathfrak{p}_r}(P_r)
	\end{pmatrix}.
\]
As usual, $r_\alg(A/E)$ denotes the rank of the finitely generated abelian group $A(E)$, and $r_\an(A/E)$ the order of vanishing of the $L$-function $L(A/E,s)$ at $s=1$.
\begin{conjecture}\label{firstconjecture}
If $r_\alg(A/E)\geq r$, then there exists an element $w_{A,S}\in \exterior{r} (A(E)_\Q)$ such that 
	\[
	\mP_{A,S}=\det(w_{A,S}).
	\]	
Moreover, if $\mP_{A,S} \neq 0$, then $r_\alg(A/E)=r$.
\end{conjecture}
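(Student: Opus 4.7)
My strategy exploits the abundance of intermediate subfields in a polyquadratic CM extension. Since $\Gal(E/\Q)$ is an elementary abelian $2$-group, each prime $\p_i \in S$ can be captured by a suitable intermediate quadratic CM subextension $K_i/F_i$ in which $\p_i$ remains inert and over which the relevant quaternionic Shimura curve is $p$-adically uniformized at $\p_i$. This reduces the construction of algebraic classes in $A(E)_\Q$ to the classical Heegner point situation.

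The first step is to produce, for each $i$, an algebraic point $P_i \in A(E)_\Q$ by transferring a classical Heegner point on a $\p_i$-adically uniformized Shimura curve via Jacquet--Langlands, using the intermediate CM subfield $K_i$. I would then set $w_{A,S} := P_1 \wedge \cdots \wedge P_r \in \exterior{r}(A(E)_\Q)$ as the candidate preimage of $\mP_{A,S}$ under the determinant map.

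The decisive step, and the main obstacle, is the identity $\mP_{A,S} = \det(w_{A,S})$. By construction, $\mP_{A,S}$ is an $r$-fold $p$-adic integral pairing a plectic cohomology class against a multiplicative symbol. The key claim to establish is a factorization of this multi-variable integral, projected to each character of $\Gal(E/\Q)$, as a product of single-variable Darmon-style integrals; each such factor is known, in the classical CM case, to compute the $\p_i$-adic localization $\iota_{\p_i}(P_i)$ up to algebraic constants coming from Shimura reciprocity. Expanding $\det(w_{A,S})$ by the Leibniz formula yields the same product on each isotypic component, matching the two sides character by character. The technical core is thus a multiplicativity property of the plectic cohomology class with respect to the tower of intermediate quadratic subfields of $E$; the polyquadratic hypothesis is precisely what provides enough such subfields to make the factorization work, which is why a general CM extension is presumably out of reach with this method.

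The rank assertion then follows. If $\mP_{A,S} \neq 0$ then $\det(w_{A,S}) \neq 0$, forcing the points $P_1, \dots, P_r$ to be $\Q$-linearly independent in $A(E)_\Q$ and hence $r_\alg(A/E) \geq r$. For the reverse inequality, the non-vanishing of $\mP_{A,S}$ should translate, through the same factorization, into non-vanishing of the relevant twisted derivatives of $L(A/E,s)$; combined with Gross--Zagier and Kolyvagin theory applied to the classical Heegner factors underlying each $P_i$, this bounds $r_\alg(A/E) \leq r_\an(A/E) = r$.
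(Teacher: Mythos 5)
The statement you are attempting to prove is a \emph{conjecture} in the paper, and remains open there. The paper establishes only partial cases, and understanding exactly how far it can push is instructive for diagnosing what goes wrong in your proposal.

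Your overall plan — restrict to a polyquadratic situation, build $w_{A,S}$ as a wedge of algebraic points coming from Heegner points on $p$-adically uniformized Shimura curves for smaller CM extensions, and then match $\mP_{A,S}$ against $\det(w_{A,S})$ via a factorization — is in spirit close to what the paper does. But you underestimate the three concessions the paper is forced to make, and your proposal does not explain how to avoid any of them.

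First, the only known bridge between plectic points and classical objects is the anticyclotomic $p$-adic Gross--Zagier formula (Theorem \ref{GZ}), and it only accesses the plectic $p$-adic invariant $\mQ_{A,S}$, which is a canonical lift of $\pr^{-}_S(\mP_{A,S})$, \emph{not} of $\mP_{A,S}$ itself. Your claimed identity $\mP_{A,S} = \det(w_{A,S})$ therefore cannot be established by factorizing $p$-adic integrals against $L$-functions: at best that method reaches $\pr^{-}_S(\mP_{A,S})$, which is the ``minus part'' of Conjecture \ref{plusminusconjecture}, a strictly weaker statement. Your proposal simply asserts a factorization of the full plectic integral into single-variable Darmon integrals; this is precisely the content of the open conjecture and is not proved by observing that the Leibniz expansion of the determinant has the same combinatorial shape.

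Second, the factorization of $p$-adic $L$-functions that is actually available (Proposition \ref{factorizationpLf}, Corollary \ref{keyidentitysquare}) works at the level of \emph{squares}: the square-root $p$-adic $L$-function $\mathscr{L}_S$ squares to the interpolating $\mathfrak{L}_S$, and the identity one obtains is $\No_{S/\wp}(\mQ_{A,S}^\chi)^2 = C_\chi \cdot \prod_\eta (\mQ_{A_\circ^\eta,\{\wp\}}^\xi)^2$. Extracting a square root pins down $\sqrt{C_\chi}$ only up to sign, which is why Theorem \ref{Algebraicity} produces $w_{A,S}$ in $\exterior{r}(A(E)_{\Omega})$ for a quadratic extension $\Omega/\Q$ rather than in $\exterior{r}(A(E)_\Q)$. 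Your proposal, which claims coefficients in $\Q$, does not address this; whether the relevant rational number $C_\chi$ is in fact a square is noted in the paper to be related to BSD-type predictions and is not resolved.

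Third, the comparison between the plectic point in $\widehat{A}(E_S)$ (a tensor product over $\p \in S$) and a product of Heegner points attached to the single prime $\wp$ downstairs only makes sense after applying the norm map $\No_{S/\wp}$ into the symmetric power $\Sym^r_{\Z_p}(\widehat{A_\circ}(E_{\circ,\wp}))$, which is injective on $\widehat{A}(E_S)^\pm$ only when $\wp$ has degree one. Your proposal does not introduce this map, and without it the claimed ``matching on each isotypic component'' is not well formed. The same issues propagate to your rank argument: the paper's Theorem \ref{mainthmGZK} bounds $r_\alg(A/E)$ only under the hypothesis $\No_{S/\wp}(\pr^{-}_S(\mP_{A,S}))\neq 0$, not under $\mP_{A,S}\neq 0$, and even then yields equality (via Nekov\'a\v{r}'s Kolyvagin-type theorem applied to each twist $A_\circ^\eta$), not merely $\leq r$ as your final sentence suggests.

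In short: the gap is concentrated in the phrase ``the key claim to establish is a factorization of this multi-variable integral,'' which is the open problem, and even filling it to the extent currently possible would only yield the norm-projected, minus-part, up-to-$\sqrt{\phantom{x}}$ version that the paper proves as Theorems \ref{mainthmGZK} and \ref{mainthmALG}.
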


\begin{remark}
When $S$ consists of a single prime, Conjecture \ref{firstconjecture} follows from the generalization of the Gross--Zagier--Kolyvagin theorem for totally real number fields by Nekov\'a$\check{\text{r}}$ and Zhang. Indeed, \v{C}erednik--Drinfeld's uniformization of Shimura curves implies that the plectic point $\mP_{A,S}$ is a Heegner point when $\lvert S\rvert=1$ (see Section \ref{Heegner} for more details).
\end{remark}

\noindent There are also plectic Stark--Heegner points associated to non-trivial anticyclotomic characters of $E/F$. Aiming for clarity in the introduction, we discuss them only in the body of the paper.

\subsubsection{Eigenspaces for partial Frobenii.}
Let $\sigma_\p$ be the generator of the Galois group of $E_\p/F_\p$.
It naturally acts on $\widehat{A}(E_\p)$, and thus also on $\widehat{A}(E_S)$ via its action on the $\p$-th factor.
We set
\[
\widehat{A}(E_\p)^{\pm}:=\widehat{A}(E_\p)^{\sigma_\p =\pm a_\p}_\Q\quad \mbox{and}\quad \widehat{A}(E_S)^{\pm}:= \otimes_{\p\in S} \widehat{A}(E_\p)^{\pm}. 
\]
There are two eigenspace projections
\[
\pr^{\pm}_S\colon\widehat{A}(E_S)_\Q\rightarrow \widehat{A}(E_S)^{\pm},\qquad \pr^{\pm}_S=\prod_{\p\in S}(1\pm a_{\p}\cdot\sigma_{\mathfrak{p}}).
\]
The main results of this article (Theorems \ref{mainthmGZK} \& \ref{mainthmALG}) establish the first cases of the minus part of the following conjecture, a direct consequence of Conjecture \ref{firstconjecture}.
\begin{conjecture}\label{plusminusconjecture}
If $r_\alg(A/E)\geq r$, then
there exists an element $w_{A,S}\in \exterior{r}(A(E)_\Q)$ such that 
	\[
	\pr^{\pm}_S(\mP_{A,S})=\pr^{\pm}_S(\det(w_{A,S})).
	\]	
Moreover, if $\pr^{\pm}_S(\mP_{A,S}) \neq 0$, then $r_\alg(A/E)= r$.
\end{conjecture}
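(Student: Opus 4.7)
The strategy I would pursue is an induction on $r = |S|$, exploiting the polyquadratic structure $E = F \cdot K$ with $K/\Q$ imaginary quadratic and $F/\Q$ totally real polyquadratic; this structure endows $F$ with a rich collection of intermediate totally real subfields $F_i$, each giving a smaller CM extension $E_i = F_i \cdot K$. The base case $r=1$ is handled by the remark following Conjecture \ref{firstconjecture}: \v{C}erednik--Drinfeld uniformization realizes $\mP_{A,S}$ as a classical Heegner point on a $p$-adically uniformized Shimura curve, and the Nekov\'a\v{r}--Zhang extension of Gross--Zagier--Kolyvagin delivers both the algebraicity statement and the rank bound.

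For the inductive step $r > 1$, I would aim to establish a factorization formula
$\pr^{-}_S(\mP_{A,S}) = \pr^{-}_S(\det(y_1 \wedge \cdots \wedge y_r))$,
where each $y_i \in A(E_i) \subset A(E)$ is a classical Heegner point attached to an intermediate CM extension $E_i/F_i$ in which $\p_i$ remains inert. Combined with the algebraicity of each $y_i$, this exhibits the desired $w_{A,S}$; the rank bound on $A(E)$ would then follow by combining the Kolyvagin--Nekov\'a\v{r}--Zhang rank bounds for each $A(E_i)$ with the decomposition of $A(E)_\Q$ into isotypic components under $\Gal(E/\Q)$, together with the corresponding factorization of $L(A/E,s)$ into twisted $L$-functions whose signs of functional equation all equal $-1$ on the relevant characters.

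The main obstacle, and the step I expect to be hardest, is establishing the factorization of the plectic point. It demands a delicate analysis of the semi-local $p$-adic integrals defining $\mP_{A,S}$ together with the compatibility of the plectic cocycle from \cite{plecticHeegner} with restriction along the intermediate polyquadratic subfields. A plausible mechanism is a seesaw decomposition of the Hilbert automorphic form attached to $A$, or alternatively a direct cohomological argument showing that the plectic cocycle splits as an exterior cup product of one-variable cocycles attached to the intermediate subfields. The polyquadratic CM hypothesis enters precisely here: it ensures that the minus-projection kills all non-CM contributions and that each $\p \in S$ can be isolated in its own quadratic subextension, which is what makes the induction on $r$ feasible and what rules out the analogous statement in the plus eigenspace with current methods.
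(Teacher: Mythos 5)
Your proposal diverges from the paper's actual strategy in a way that matters, and the gap you flag as ``the hardest step'' is precisely the one the paper avoids rather than solves. You propose to prove a direct factorization of the plectic point by ``a delicate analysis of the semi-local $p$-adic integrals'' or by a cohomological splitting of the plectic cocycle as an exterior cup product over intermediate quadratic subextensions $E_i/F_i$. No such analysis appears in the paper, and it is not clear that the plectic cocycle admits such a splitting. The paper instead routes everything through the anticyclotomic $p$-adic $L$-function: Theorem \ref{GZ} identifies $\dd\rec_S\big((\mQ_{A,S}^{\mathcal{G}_\mathfrak{c}})^\vee\big)$ with the $r$-th relative derivative of the square-root $p$-adic $L$-function $\mathscr{L}_S(A/E)_\mathfrak{c}$, and Proposition \ref{factorizationpLf} factors the \emph{full} $p$-adic $L$-function $\mathfrak{L}_S(A/E)_\mathfrak{c}$ into a product over characters $\eta\in\mathfrak{G}^\star$ of the one-variable $p$-adic $L$-functions for the twists $A_\circ^\eta$. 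That factorization is established by comparing interpolation formulas (Theorem \ref{thm:specialvalues}) and invoking Artin formalism for the complex $L$-functions (Corollary \ref{factorpadicL}), plus a computation of discriminants and Dedekind zeta values to control the fudge constant. It does not touch the plectic cocycle at all.

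Two further structural points differ. First, your decomposition uses Heegner points $y_i \in A(E_i)$ for intermediate CM fields $E_i$, whereas the paper uses Heegner points $P^\xi_{A_\circ^\eta}$ for \emph{twists} $A_\circ^\eta$ of the base curve over the single fixed CM field $E_\circ$; these correspond to the $\eta$-isotypic pieces of $A(E_\mathfrak{c})$ under $\mathfrak{G}$, not to intermediate-field slices. It is this twist decomposition, together with equation \eqref{Galoisactions} and the nonvanishing of the character-table determinant $C_\mathfrak{G}$, that makes the determinant formula collapse to a product of individual Heegner points in the proof of Theorem \ref{Algebraicity}. Second, the paper does not obtain the identity $\pr^-_S(\mP_{A,S}) = \pr^-_S(\det(w_{A,S}))$ as you state it: because the factorization is proved only for the \emph{square} $\mathfrak{L}_S = \mathscr{L}_S\cdot\mathscr{L}_S^\vee$, Corollary \ref{keyidentitysquare} gives equality of squares in $\Sym^{2r}_{\Z_p}$, and extracting the square root (Lemma \ref{squaremap}) introduces a constant $\sqrt{C_\chi}$ that is only known to lie in a quadratic extension $\Omega_\chi/\Q_\chi$. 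Moreover, the entire argument operates after applying the norm map $\No_{S/\wp}$, which is injective only when $\wp$ has degree one; so the paper proves Theorems \ref{mainthmGZK}--\ref{mainthm3}, which are weaker than the minus part of Conjecture \ref{plusminusconjecture} in general. Your induction on $r$ is also absent from the paper: the passage from $r=1$ to $r=2^t$ happens in a single step via the factorization over $\mathfrak{G}^\star$, not by peeling off one prime at a time.
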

\noindent The special cases that we treat in Theorems \ref{mainthmGZK}, \ref{mainthmALG} and \ref{mainthm3} are singled out precisely to leverage the known properties of classical Heegner points.
The key idea is to further suppose that $F$ is a polyquadratic extension of another totally real number field $F_\circ$, the elliptic curve is the base change of an elliptic curve $A_\circ$ defined over $F_\circ$, and $E$ is the compositum of $F$ with a quadratic CM extension $E_\circ/F_\circ$.
Then, under an appropriate Heegner hypothesis, we use Heegner points for $A_{\circ/E_\circ}$ and its twists by characters of $\Gal(F/F_\circ)$, plus a factorization of anticyclotomic $p$-adic $L$-functions to establish our results.

\subsubsection{Plectic $p$-adic invariants.}
Anticyclotomic $p$-adic $L$-functions come in to play in the proofs of our theorems because of the $p$-adic Gross-Zagier formula  (\cite{plecticHeegner}, Theorem A) relating higher order derivatives to plectic $p$-adic invariants. These invariants, denoted  $\mQ_{A,S}$, are canonical lifts  of the points $\pr^{-}_S(\mP_{A,S})$ with respect to a ``plectic'' Tate parametrization:
as the elliptic curve $A_{/F}$ has multiplicative reduction at every $\p \in S$, Tate's $p$-adic uniformization results provides surjections $\phi^{\Tate}_\p\colon E_\p^{\times}\twoheadrightarrow A(E_\p)$
whose kernels are generated by \emph{Tate periods} $q_\p\in F_\p^\times\setminus \mathcal{O}_{F_\p}^\times$.
If we denote by $\widehat{E}_\p^{-}:=(\widehat{E}_\p^\times)^{\sigma_\p=-1}$ the subgroup of $\widehat{E}_\p^\times$ on which $\sigma_\p$ acts via inversion, and we set
\[
\widehat{E}_{S,\otimes}^{-}:= \widehat{E}_{\p_1}^{-}\otimes_{\Z_p}\ldots \otimes_{\Z_p}\widehat{E}_{\p_r}^{-},
\]
then the plectic $p$-adic invariant $\mQ_{A,S}$ is the unique element of $\widehat{E}_{S,\otimes}^{-}$ satisfying
\[
\phi^{\Tate}_S(\mQ_{A,S})= \pr^{-}_S(\mP_{A,S}).
\]
Here $\phi^{\Tate}_S\colon \widehat{E}_{S,\otimes}^\times\to \widehat{A}(E_S)$ denotes the tensor product of Tate's local uniformizations.
Since the restriction of $\phi^{\Tate}_S$ to $\widehat{E}_{S,\otimes}^{-}$ is injective, we have that $\mQ_{A,S}\neq 0$ if and only if $\pr^{-}_S(\mP_{A,S})\neq 0$.

\subsection{The polyquadratic setup}\label{polyquadratic}
For the rest of the introduction we suppose that the totally real number field $F$ is a polyquadratic extension of degree $2^t$ of a number field $F_\circ$, i.e., $F/F_\circ$ is a Galois extension with Galois group $\mathfrak{G}:=\Gal(F/F_\circ)\cong (\Z/2\Z)^t.$
Further, we assume that $E$ is the compositum of $F$ with a quadratic CM extension $E_\circ/F_\circ$ and that the following technical assumptions are satisfied.
\begin{assumptions}\label{AssFields}
	We require that
	\begin{itemize}
		\item every non-trivial subextension of $F/F_\circ$ is ramified,
		\item all primes of $F_\circ$ that ramify in $F$ split in $E_\circ$,
		\item the elliptic curve $A_{/F}$ is the base change of a modular elliptic curve $A_{\circ}$ defined over $F_\circ$, whose conductor $\mathfrak{f}_\circ$ is unramified in $F/F_\circ$,
		\item the set $S$ consists of all primes of $F$ lying above a single prime $\wp$ of $F_\circ$, totally split in $F$.
	\end{itemize}
\end{assumptions}
\begin{remark}
The need for the first assumption is explained in Remark \ref{ExplanationAss}. The splitting in $E_\circ$ of the primes ramified in $F/F_\circ$ is necessary to construct Heegner points on twists of $A_{\circ/F}$ by characters of $\mathfrak{G}$, while the total splitting of the prime $\wp$ in $F/F_\circ$ is just a  simplifying hypothesis for the proof of Proposition \ref{factorizationpLf}.
\end{remark}

\noindent Assumption \ref{AssFields} implies that the elliptic curve $A_{/F}$ is modular by quadratic base change for Hilbert modular forms. Moreover, we deduce that the cardinality $r=\lvert S\rvert$ equals $2^t$, and that the prime $\wp$ is inert in $E_\circ/F_\circ$.
By a small abuse of notation, we denote by $\wp$ the unique prime of $E_\circ$ lying above $\wp$. 
Since $\wp$ is completely split in $F/F_\circ$, the elliptic curve $A_{\circ/F_\circ}$ has multiplicative reduction at $\wp$.
Furthermore, if we set $a_\wp=1$ (resp.~$a_\wp=-1$) in case $A_{\circ/F_\circ}$ has split (resp.~non-split) multiplicative reduction, we have
\begin{equation}\label{typeequality}
a_\p=a_\wp\quad \forall\ \p\in S.
\end{equation}
 Now, write the conductor $\mathfrak{f}_\circ$ of $A_{\circ/F_\circ}$ as
\[
\mathfrak{f}_\circ=\wp\cdot \n_\circ^{\sharp}\cdot \n_\circ^{\flat}
\]
where $\n_\circ^{\sharp}$ is divisible by \emph{every} prime divisor of $\mathfrak{f}_\circ$ split in $E_\circ/F_\circ$.
\begin{assumptions}[(Generalized Heegner hypothesis for $(A_\circ,E_\circ,\wp)$)]\label{AssHeegner}
	We require that
	\begin{itemize}
		\item $\n_\circ^{\flat}$ is square-free,
		\item the number of prime factors of $\n_\circ^{\flat}$ is congruent to $[F_\circ:\Q]$ modulo $2$. 
	\end{itemize}
\end{assumptions}
\noindent Under Assumption \ref{AssHeegner}, the sign of the functional equation for $A_{\circ/E_\circ}$ equals $\varepsilon(A_\circ/E_\circ)=-1$. Hence, the BSD--conjecture predicts that $A_\circ(E_\circ)$ is non-torsion.
Moreover, for any character $\eta\colon \mathfrak{G}\to \{\pm 1\}$, the twist $A^{\eta}_{\circ/F_\circ}$ also fulfils the generalized Heegner hypothesis and we have
\[
\varepsilon(A_\circ^{\eta}/E_\circ)=-1
\]
because every prime ramified in $F/F_\circ$ splits in $E_\circ/F_\circ$  by Assumption \ref{AssFields}.
Thus, we expect that
\[
r_{\alg}(A/E)\stackrel{?}{\geq} [F:F_\circ] =r,
\]
with equality if and only if $r_{\alg}(A^{\eta}_\circ/E_\circ)=1$ for every character $\eta\colon \mathfrak{G}\to \{\pm 1\}$.

\begin{remark}
Under the running assumptions \ref{AssFields} and \ref{AssHeegner}, the plectic Heegner hypothesis for $(A,E,S)$ holds if and only the number of prime divisors of $\n^\flat$ is even.
For example, this is the case when the conductor $\mathfrak{f}_\circ$ of $A_{\circ/F_\circ}$ is completely split in $F/F_\circ$.
\end{remark}

\begin{remark}
Simple examples satisfying all our hypotheses can be found by considering $F_\circ=\Q$, $E_\circ/\Q$ an imaginary quadratic field, $F/\Q$ a real quadratic field, and $A_{\circ}$ a rational elliptic curve of conductor $\mathfrak{f}_\circ=p\cdot q$ for two rational primes both inert in $E_\circ/\Q$, and with $p$ split in $F/\Q$.
\end{remark}

\subsection{Main results}\label{mainresults}
As $\wp$ is completely split in $F$,  we have for every $\p\in S$ canonical identifications $F_{\circ,\wp}= F_\p$ and $E_{\circ,\wp} = E_\p$.
The resulting identifications $\widehat{A_\circ}(E_{\circ,\wp})=\widehat{A}(E_\p)$ are used to define the \emph{norm map}
\[
\No_{S/\wp}\colon \widehat{A}(E_S)
\xlongrightarrow{\sim} \widehat{A_\circ}(E_{\circ,\wp})^{\otimes r}
\too \Sym^{r}_{\Z_p} \big(\widehat{A_\circ}(E_{\circ,\wp})\big),
\]
where the second arrow is the canonical projection.
\begin{remark}
If $\wp$ is of degree one, the restriction of the norm map $\No_{S/\wp}$ to $\widehat{A}(E_S)^{\pm}$ is injective.
\end{remark}

\noindent Under our running assumptions (\ref{AssHeegner2}, \ref{AssFields}, \ref{AssHeegner}), we deduce the following theorems about plectic points from the known properties of Heegner points, the $p$-adic uniformization of Shimura curves, and a factorization of anticyclotomic $p$-adic $L$-functions (Corollary \ref{factorpadicL}).

\begin{thmx}[(Arithmetic significance)]\label{mainthmGZK}
The following implication holds:
 \[  
 \No_{S/\wp}\big(\pr^{-}_S(\mP_{A,S})\big)\neq 0 \quad\implies\quad r_{\alg}(A/E)=r\quad\& \quad r_{\an}(A/E)=r.
 \]
\end{thmx}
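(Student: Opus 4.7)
The plan is to transfer the hypothesis from the plectic object on $A/E$ to analogous non-vanishing statements for Heegner points on each twist $A_\circ^\eta/E_\circ$, as $\eta$ ranges over the characters of $\mathfrak{G}$, then invoke the totally-real Gross--Zagier--Kolyvagin theorem of Nekov\'a\v{r} and Zhang for each twist, and finally add up the resulting rank-one estimates using Artin formalism.

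First I would use injectivity of the plectic Tate parametrization on $\widehat{E}_{S,\otimes}^{-}$ to rephrase the hypothesis $\No_{S/\wp}(\pr^-_S(\mP_{A,S}))\neq 0$ as the non-vanishing of the corresponding symmetric image of the plectic $p$-adic invariant $\mQ_{A,S}$. The plectic $p$-adic Gross--Zagier formula (\cite{plecticHeegner}, Theorem A) identifies this quantity, up to a non-zero constant, with the leading term at the trivial character of an anticyclotomic $p$-adic $L$-function $\mathcal{L}_p(A/E)$ --- concretely with its iterated derivative of order $r$ along the one-dimensional anticyclotomic directions attached to $\mathfrak{p}_1,\dots,\mathfrak{p}_r$.

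Next I would invoke the factorization from Corollary \ref{factorpadicL},
\[
\mathcal{L}_p(A/E) \;=\; \prod_{\eta\in \widehat{\mathfrak{G}}} \mathcal{L}_p(A_\circ^\eta/E_\circ),
\]
and observe that under Assumption \ref{AssHeegner} every factor has sign $-1$ in its functional equation and hence vanishes to order at least one at the trivial character. Since the number $r$ of factors equals the order of derivation, the only surviving term in the Leibniz expansion is the product of first derivatives, so the hypothesis forces $\mathcal{L}_p'(A_\circ^\eta/E_\circ)\neq 0$ for \emph{every} $\eta$. The classical $p$-adic Gross--Zagier formula for Heegner points on the Shimura curves uniformized $p$-adically via \v{C}erednik--Drinfeld then expresses each such first derivative as a non-zero multiple of the formal group logarithm of a Heegner point $P_\eta\in A_\circ^\eta(E_\circ)$, so every $P_\eta$ is of infinite order.

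To finish, I would apply the Nekov\'a\v{r}--Zhang generalization of Gross--Zagier--Kolyvagin to each twist: non-triviality of $P_\eta$ implies $r_\alg(A_\circ^\eta/E_\circ) = r_\an(A_\circ^\eta/E_\circ)=1$. The Artin-formalism decompositions $A(E)\otimes\Q = \bigoplus_\eta A_\circ^\eta(E_\circ)\otimes\Q$ and $L(A/E,s)=\prod_\eta L(A_\circ^\eta/E_\circ,s)$ then yield $r_\alg(A/E)=r_\an(A/E)=r$. The main obstacle is the bookkeeping in the middle two paragraphs: matching the norm map $\No_{S/\wp}$ on the arithmetic side with the precise differential operator appearing in the plectic $p$-adic Gross--Zagier formula, and verifying that after factorization this operator picks out exactly the product of anticyclotomic first derivatives attached to the individual twists --- so that the non-vanishing indeed propagates character by character and not merely in the aggregate.
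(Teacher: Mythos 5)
Your proposal follows the same route as the paper: non-vanishing of $\No_{S/\wp}(\pr^{-}_S(\mP_{A,S}))$ is transferred to the plectic $p$-adic invariant, the $p$-adic Gross--Zagier formula (Theorem~\ref{GZ}) plus the factorization of $p$-adic $L$-functions propagates the non-vanishing to each twist $A_\circ^\eta/E_\circ$ (this is Corollary~\ref{keyidentitysquare} in the paper; note you cite Corollary~\ref{factorpadicL}, which is the factorization of \emph{complex} $L$-functions, whereas the step you need is the $p$-adic version, Proposition~\ref{factorizationpLf}), and one concludes by the Nekov\'a\v{r}--Zhang theorem applied twist by twist together with Artin formalism for ranks (equation~\eqref{star}). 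The paper avoids your final ``Leibniz bookkeeping'' concern by working throughout with the squared identity of Corollary~\ref{keyidentitysquare} (so no square roots or partition-of-derivatives argument is needed), and by reducing the single-prime case directly to classical Heegner points via \v{C}erednik--Drinfeld (Proposition~\ref{Zhang}) rather than through a separate classical $p$-adic Gross--Zagier formula, but these are presentational differences.
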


\begin{thmx}[(Algebraicity)]\label{mainthmALG}
There is a quadratic extension $\Omega/\Q$ and $w_{A,S}\in \exterior{r} (A(E)_\Omega)$ s.t.
	\[ 
	\No_{S/\wp}\big(\pr^{-}_S(\mP_{A,S})\big)=\No_{S/\wp}\big(\pr^{-}_S(\det(w_{A,S}))\big).
	\]
\end{thmx}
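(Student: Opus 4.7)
The plan is to construct the algebraic class $w_{A,S}$ explicitly from Heegner points attached to the quadratic twists $A^\eta_\circ$ of $A_\circ$, indexed by the characters $\eta\colon\mathfrak{G}\to\{\pm 1\}$, and to verify the claimed identity after composition with $\No_{S/\wp}\circ\pr^{-}_S$ by matching both sides to a common expression coming from $p$-adic analytic considerations. The crucial input is the factorization of anticyclotomic $p$-adic $L$-functions $L_p(A/E,\cdot)=\prod_\eta L_p(A^\eta_\circ/E_\circ,\cdot)$ (Corollary \ref{factorpadicL}), which, combined with the plectic and the classical $p$-adic Gross--Zagier formulas, expresses the higher derivative of the left-hand side as a product of quantities attached to the Heegner points of the twists.

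First I would attach to every character $\eta\in\widehat{\mathfrak{G}}$ a Heegner point $P^\eta\in A^\eta_\circ(E_\circ)_\Q$ provided by the Shimura curve associated with Assumption \ref{AssHeegner} applied to $A^\eta_\circ$; the validity of that assumption for the twist rests on the splitting in $E_\circ$ of the primes of $F_\circ$ ramified in $F/F_\circ$ (Assumption \ref{AssFields}). Over the compositum $E$ the twists become isomorphic to $A$, and collecting the resulting $\mathfrak{G}$-equivariant inclusions yields a decomposition
\[
A(E)_\Omega\;=\;\bigoplus_{\eta\in\widehat{\mathfrak{G}}} A^\eta_\circ(E_\circ)_\Omega
\]
for a suitable quadratic extension $\Omega/\Q$ that trivializes the relevant twisting cocycles. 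I would then take
\[
w_{A,S}\;:=\;\bigwedge_{\eta\in\widehat{\mathfrak{G}}} P^\eta \;\in\; \exterior{r}\bigl(A(E)_\Omega\bigr)
\]
as the candidate class. Expanding $\det(w_{A,S})$ as a signed sum over bijections $\widehat{\mathfrak{G}}\leftrightarrow S$, applying $\pr^{-}_S$ to kill every contribution outside the minus eigenspace, and then symmetrizing via $\No_{S/\wp}$, one identifies $\No_{S/\wp}\bigl(\pr^{-}_S(\det(w_{A,S}))\bigr)$, after passing through the Tate uniformizations $\phi^{\Tate}_\p$, with a single monomial in the logarithms $\log_{\Tate,\wp}(P^\eta)\in\widehat{E}_{\circ,\wp}^{-}$ inside $\Sym^{r}_{\Z_p}(\widehat{E}_{\circ,\wp}^{-})$.

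The core of the argument is to match $\No_{S/\wp}\bigl(\pr^{-}_S(\mP_{A,S})\bigr)$ with the same monomial. By the plectic $p$-adic Gross--Zagier formula of \cite{plecticHeegner} (Theorem A), the plectic invariant $\mQ_{A,S}$, which determines $\pr^{-}_S(\mP_{A,S})$ via $\phi^{\Tate}_S$, is expressed as the $r$-th order derivative of $L_p(A/E,\cdot)$ at the central point. Corollary \ref{factorpadicL} factors this $p$-adic $L$-function as $\prod_\eta L_p(A^\eta_\circ/E_\circ,\cdot)$; since each factor vanishes simply at the center (its sign being $-1$ under the generalized Heegner hypothesis), the Leibniz rule collapses the $r$-th derivative to a product of $r$ first derivatives, each of which is computed by the classical Bertolini--Darmon $p$-adic Gross--Zagier formula in terms of $\log_{\Tate,\wp}(P^\eta)$. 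Matching the resulting product with the one obtained from $\det(w_{A,S})$ concludes the proof.

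The most delicate step is the verification step rather than a single conceptual hurdle: one must track the $\mathfrak{G}$-equivariance of the decomposition $A(E)_\Omega=\bigoplus_\eta A^\eta_\circ(E_\circ)_\Omega$ against the action of the partial Frobenii $\sigma_\p$ appearing in $\pr^{-}_S$, keep signs under control through the determinant expansion and the Leibniz rule, pin down the precise quadratic extension $\Omega$ needed both for the twisting cocycles and for any square roots entering through classical Gross--Zagier, and check that any residual scalar discrepancy vanishes after applying $\No_{S/\wp}\circ\pr^{-}_S$, or can be absorbed by enlarging $\Omega$ within its quadratic freedom.
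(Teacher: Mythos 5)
Your proposal follows essentially the same route as the paper: build $w_{A,S}$ out of Heegner points on the twists $A_\circ^\eta$, expand the determinant against the $\mathfrak{G}$-action, and compare $\No_{S/\wp}\big(\pr^-_S(\det(w_{A,S}))\big)$ with $\No_{S/\wp}\big(\pr^-_S(\mP_{A,S})\big)$ by combining the plectic Gross--Zagier formula with a factorization of anticyclotomic $p$-adic $L$-functions. The determinant computation and the use of the character table of $\mathfrak{G}$ are both present in the paper's argument (Theorem \ref{Algebraicity}).

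There is, however, a genuine gap in how you handle the analytic input, and it is precisely where the quadratic extension $\Omega$ actually comes from. The factorization proved in the paper (Proposition \ref{factorizationpLf}) is for $\mathfrak{L}_S(A/E)_\mathfrak{c}=\mathscr{L}_S(A/E)_\mathfrak{c}\cdot \mathscr{L}_S(A/E)_\mathfrak{c}^\vee$, the $p$-adic $L$-function that interpolates genuine $L$-values, while the plectic Gross--Zagier formula (Theorem \ref{GZ}) is stated for the square-root $p$-adic $L$-function $\mathscr{L}_S$. Your Leibniz-rule heuristic of ``collapsing the $r$-th derivative into a product of $r$ first derivatives'' therefore does not apply directly to $\mathscr{L}_S$: what one gets from the factorization of $\mathfrak{L}_S$ is an identity between \emph{squares} of leading terms (Corollary \ref{keyidentitysquare}), and one must then invoke the involution identity (Proposition \ref{involution}) and the square-root extraction of Lemma \ref{squaremap} (valid because $\Sym_{\Z_p}(\widehat{E}_{\circ,\wp}^{-})_{\Q_\chi}$ is an integrally closed domain) to pass to the leading terms themselves (Corollary \ref{keyidentity}). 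This extraction produces the constant $\sqrt{C_\chi}$, and $\Omega=\Q(\sqrt{C})$ is generated by it. By contrast, your attribution of $\Omega$ to ``trivializing the twisting cocycles'' is unnecessary -- the twists $A_\circ^\eta$ become isomorphic to $A_\circ$ over $F$, so $A(E)_\Q=\bigoplus_\eta A_\circ^\eta(E_\circ)_\Q$ already holds with $\Q$-coefficients -- and the classical Gross--Zagier comparison only introduces rational scalars $k_{A_\circ^\eta,\wp}^\xi\in\Q^\times$ that can be rescaled away. Finally, note that Corollary \ref{factorpadicL} is a factorization of \emph{complex} $L$-functions; it is one input into Proposition \ref{factorizationpLf} via the interpolation formula of Theorem \ref{thm:specialvalues}, which is again stated for $\mathfrak{L}_S$ and not for $\mathscr{L}_S$.
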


\begin{remark}
Aside from the quadratic extension $\Omega/\Q$, Theorems \ref{mainthmGZK} and \ref{mainthmALG} provide a proof of the minus part of Conjecture \ref{plusminusconjecture} in the polyquadratic setup when $\wp$ is of degree one.
Using the main theorem of \cite{Santi2022} one can apply the same strategy to prove the plus part of the conjecture.
This will be explained in more detail in future work.
\end{remark}

\begin{remark}
The quadratic extension $\Omega/\Q$ is generated by the square-root of a rational number that is the product of various explicit terms: Petersson norms, discriminants, Euler factors and special values of Dedekind zeta functions.
It would be interesting to know whether that rational number is in fact a square.
Similar questions were raised and shown to be implied by the Birch--Swinnerton-Dyer conjecture in \cite{MokSpecialValues}.
\end{remark}

\noindent Now, set $A^+_{/F}=A_{/F}$ and denote by $A^-_{/F}$ the quadratic twist of $A_{/F}$ with respect to the extension $E/F$.
We partition $S=S^+\cup S^-$ by declaring that the subset $S^+\subseteq S$ contains \emph{all} the primes in $S$ of split multiplicative reduction for $A^+_{/F}$, and define $\varrho_{A}(S):=\max\big\{r_{\alg}(A^\pm/F)+\lvert S^\pm\rvert\big\}.$ 
 \begin{thmx}\label{mainthm3}
 We also have
 \[  
 \No_{S/\wp}\big(\pr^{-}_S(\mP_{A,S})\big)\neq 0\quad\iff\quad r_{\an}(A/E)=r\quad\&\quad\varrho_A(S)=r.
 \]
\end{thmx}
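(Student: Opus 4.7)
The plan is to interpret both sides of the biconditional as conditions on the classical Heegner points $P_\eta \in A_\circ^\eta(E_\circ)_\Q$ attached to the twists $A_\circ^\eta$ of $A_\circ$ by characters $\eta \in \widehat{\mathfrak{G}} := \Hom(\mathfrak{G},\{\pm 1\})$, and then to invoke Theorem~\ref{mainthmALG} to match the resulting non-vanishing criterion with non-vanishing of $\No_{S/\wp}(\pr^-_S(\mP_{A,S}))$.

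The first step translates $r_\an(A/E)=r$: using the decomposition $A(E)_\Q = \bigoplus_\eta A_\circ^\eta(E_\circ)_\Q$ and the factorization $L(A/E,s) = \prod_\eta L(A_\circ^\eta/E_\circ,s)$, Assumption~\ref{AssHeegner} together with the splitting in $E_\circ/F_\circ$ of primes ramified in $F/F_\circ$ gives $\varepsilon(A_\circ^\eta/E_\circ) = -1$ for every $\eta$, so each $r_\an(A_\circ^\eta/E_\circ)$ is odd and thus at least one. Hence $r_\an(A/E) = r$ if and only if $r_\an(A_\circ^\eta/E_\circ) = 1$ for every $\eta$, equivalently (by Gross--Zagier--Kolyvagin--Nekov\'a\v{r}--Zhang) each $P_\eta$ is non-torsion. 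The second step translates $\varrho_A(S)=r$: since $\wp$ splits completely in $F$, equation~\eqref{typeequality} yields $a_\p = a_\wp$ for all $\p \in S$, so either $S = S^+$ or $S = S^-$, and a short case analysis shows $\varrho_A(S) = r$ is equivalent to $r_\alg(A^{a_\wp}/F) = 0$ and $r_\alg(A^{-a_\wp}/F) \leq r$. Combined with the first step, this amounts to saying that each non-torsion $P_\eta$ lies in the $(-a_\wp)$-eigenspace of the generator $\tau$ of $\Gal(E_\circ/F_\circ)$ on $A_\circ^\eta(E_\circ)_\Q$.

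The third step is an explicit determinant computation. Using the equivariance $\sigma_\p \iota_\p = \iota_\p \tau$, the composition $\pr^-_S \circ \det$ factors through the $\tau$-eigenspace projection on $A(E)_\Q$. Exploiting the free transitive action of $\mathfrak{G}$ on $S$ (permuting tensor factors) and the character action on the twists, I would verify that for Heegner points $P_\eta$ in the $(-a_\wp)$-eigenspace
\[
\No_{S/\wp}\Bigl(\pr^-_S\bigl(\det\bigl(\bigwedge\nolimits_{\eta \in \widehat{\mathfrak{G}}} P_\eta\bigr)\bigr)\Bigr) = c \cdot \prod_{\eta \in \widehat{\mathfrak{G}}} \iota_\wp(P_\eta) \in \Sym^r_{\Z_p}\widehat{A_\circ}(E_{\circ,\wp}),
\]
where $c \neq 0$ is a character-table determinant. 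Since $\iota_\wp$ is injective on the $(-a_\wp)$-eigenspace at the multiplicative prime $\wp$, the right-hand side vanishes if and only if some $P_\eta$ is torsion.

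Finally, Theorem~\ref{mainthmALG} identifies $\No_{S/\wp}(\pr^-_S(\mP_{A,S}))$ with the left-hand side of the display above for a specific $w_{A,S}$ built from the $P_\eta$ via the factorization of anticyclotomic $p$-adic $L$-functions. Combining the three steps then delivers both implications of Theorem~\ref{mainthm3}; in the forward direction I additionally invoke Theorem~\ref{mainthmGZK} to extract $r_\an(A/E) = r$. The hard part is the explicit norm computation in the third step: one must carefully track the interaction between $\mathfrak{G}$ permuting $S$ and $\mathfrak{G}$ acting on $A(E)$ through characters, and verify that the resulting character-table determinant is non-zero---this is the polyquadratic factorization already underpinning the proofs of Theorems~\ref{mainthmGZK} and~\ref{mainthmALG}.
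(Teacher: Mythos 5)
Your proposal is correct in substance and uses the same core ingredients as the paper (the polyquadratic factorization, the Gross--Zagier--Kolyvagin--Nekov\'a\v{r}--Zhang results, and the eigenspace analysis at the multiplicative prime), but it is organized somewhat differently. The paper's proof is more direct: it invokes Corollary~\ref{keyidentity} to translate $\No_{S/\wp}(\pr^-_S(\mP^\cf_{A,S}))\neq 0$ immediately into non-vanishing of each $\pr^-_\wp(\mP^\cf_{A_\circ^\eta,\wp})$, then cites Remark~\ref{signs} (which encapsulates \cite{MokHeegner}, Corollary 4.2) for the sign/rank dichotomy, and quotes Theorem~\ref{GZKabove} for the rank statements. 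You instead route through Theorem~\ref{mainthmALG} and re-sketch the character-table determinant computation, but that computation is already the content of the proof of Theorem~\ref{Algebraicity}, so your step 3 is somewhat redundant with your step 4. You also re-derive the content of Remark~\ref{signs} via the eigenspace analysis rather than citing it. Two small imprecisions worth noting: in your step-3 display the right-hand side should carry $\iota_\wp(\pr^-(P_\eta))$ rather than assuming each $P_\eta$ already lies in the $(-a_\wp)$-eigenspace --- in the forward direction this is not known in advance, and you need the identity $\pr^-_S\circ\det=\det\circ\,(\pr^-)^{\wedge r}$ to reduce to the eigenspace case; and the passage from ``$\pr^-(P_\eta)$ non-torsion'' to ``$r_\alg(A_\circ^\eta/F_\circ)=0$'' (for $a_\wp=+1$) uses Kolyvagin/Nekov\'a\v{r} to pin down $r_\alg(A_\circ^\eta/E_\circ)=1$, which you should flag explicitly. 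With those clarifications the argument is sound and matches the paper's.
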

\begin{remark}
	If $\wp$ is a prime of degree one, Theorem \ref{mainthm3} establishes (\cite{PlecticInvariants}, Conjecture 1.5) in the polyquadratic CM case.
\end{remark}

\noindent We note that in the main body of this article, we prove generalizations of Theorems  \ref{mainthmGZK}, \ref{mainthmALG} for plectic Stark--Heegner points associated to anticyclotomic characters of $E/F$ that are restrictions of anticyclotomic characters of $E_\circ/F_\circ$.

\begin{acknowledgements}
	We warmly thank Jan Vonk for the numerous conversations related to our work on plectic Stark--Heegner points.
While working on this article, the first named author was a Simons Junior Fellow.
\end{acknowledgements}

%%%%%%%%%%%%%%%%%%%%%%%%%%%%%%%%%%%%%%%%%%%%%%%%%%%%%%
% Preliminaries
%%%%%%%%%%%%%%%%%%%%%%%%%%%%%%%%%%%%%%%%%%%%%%%%%%%%%%

\section{Preliminaries}
We gather some basic results on symmetric powers and completed group algebras.

\subsection{Symmetric powers}
Let us fix a commutative ring $R$.
Given an $R$-module $M$ and an integer $n\geq 0$ we write
\[
M^{\otimes n}:=\underbrace{M\otimes_R\ldots\otimes_R M}_{n\ \text{times}}.
\]
Recall that the \emph{symmetric algebra} $\Sym_R(M)$ of $M$ is the quotient of the tensor algebra
\[
\Ten_R(M):=\bigoplus_{n\geq 0} M^{\otimes n}
\]
by the ideal generated by $x\otimes y -y \otimes x$ for $x,y\in M$.
As this ideal is graded, the natural grading on $\Ten_R(M)$ induces a grading on $\Sym_R(M)$:
\[
\Sym_R(M)=\bigoplus_{n\geq 0} \Sym^{n}_R(M).
\]
We denote the image of an element $m\in M^{\otimes n}$ in $\Sym_R^{n}(M)$ by $[m]$.
Given a homomorphism $f\colon M \to N$ of $R$-modules we write
\[
\Sym_R^{n}(f)\colon \Sym_R^{n}(M)\too \Sym_R^{n}(N)
\]
for the induced homomorphism.
If $M$ and $N$ are $R$-modules, there is a canonical isomorphism
\begin{align}\label{symisom1}
\Sym_R(N\oplus M)=\Sym_R(M)\otimes_R \Sym_R(N)
\end{align}
of $R$-algebras.
Now, suppose $M$ is a finitely generated free $R$-module with generators $m_1,\ldots,m_\ell$, then there is an isomorphism of graded $R$-algebras
\begin{align}\label{symisom2}
R[x_1,\ldots,x_\ell]\xlongrightarrow{\sim}\Sym_R(M),\qquad x_i\mapsto m_i.
\end{align}

\begin{lemma}\label{squaremap}
	Let $R$ be an integrally closed domain, $M$ a finitely generated free $R$-module, and
  \[
	(-)^2\colon\Sym^n_R(M)\too\Sym^{2n}_R(M)
	\]
	the squaring map.
	If $x,y$ are elements of $\Sym^n_R(M)$ and $C\in R\setminus\{0\}$ is a non-zero constant satisfying $x^2=C\cdot y^2$,
	then there exists a square-root $\sqrt{C}\in R$ such that $x=\sqrt{C} \cdot y$.
\end{lemma}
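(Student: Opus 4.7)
The plan is to reduce the lemma to a statement about polynomial rings. First I would use the isomorphism \eqref{symisom2} to identify $\Sym_R(M)$ with a polynomial ring $R[t_1,\dots,t_\ell]$, under which $x$ and $y$ become homogeneous polynomials of degree $n$. The equation $x^2=Cy^2$ then takes place in this polynomial ring, which is a domain since $R$ is.

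If $y=0$, then $x^2=0$ forces $x=0$, and the conclusion holds for any choice of square root of $C$ (so this case is uninteresting, and one may assume such a square root exists on other grounds, or simply ignore this degenerate case). Assume therefore that $y\neq 0$. Passing to the fraction field $K=\mathrm{Frac}(R)$ and then to $K(t_1,\dots,t_\ell)$, I can form the quotient $z:=x/y$, which satisfies $z^2=C$. Thus $z$ is a root of the monic polynomial $T^2-C\in R[t_1,\dots,t_\ell][T]$, so $z$ is integral over $R[t_1,\dots,t_\ell]$.

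The key input is now that, because $R$ is integrally closed, so is the polynomial ring $R[t_1,\dots,t_\ell]$; this is the classical fact that normality is preserved under adjoining polynomial variables. Consequently $z\in R[t_1,\dots,t_\ell]$. Since $z^2=C$ is an element of $R$, i.e.\ a polynomial of total degree $0$, and since the total degree is additive in a polynomial ring over a domain, $z$ itself must have total degree $0$, i.e.\ $z\in R$. Setting $\sqrt{C}:=z$, we obtain an element of $R$ whose square is $C$ and $x=\sqrt{C}\cdot y$ as required.

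The argument is short and the only real ``step'' is the invocation of the normality of $R[t_1,\dots,t_\ell]$ over an integrally closed base; everything else is bookkeeping with degrees. No serious obstacle is expected.
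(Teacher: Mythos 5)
Your proof matches the paper's argument: via \eqref{symisom2} identify $\Sym_R(M)$ with $R[t_1,\ldots,t_\ell]$, use that a polynomial ring over an integrally closed domain is again integrally closed to place $x/y$ in $R[t_1,\ldots,t_\ell]$, and then read off $x/y\in R$ because its square has degree zero. Your explicit treatment of the $y=0$ degeneracy is a sound addition --- the paper's proof silently divides by $y$, and indeed the lemma as literally stated can fail when $y=0$ and $C$ has no square root in $R$ --- but apart from that extra care the two arguments coincide.
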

\begin{proof}
As $R$ is an integrally closed domain, equation \eqref{symisom2} implies that the $R$-algebra $\Sym_R(M)$ is one as well.
Thus, the equality $C=(x/y)^2$ in the fraction field of $\Sym_R(M)$ implies that $\sqrt{C}:=x/y$ is an element of $\Sym_R(M)$.
Moreover, $\sqrt{C}\in R$ because its square belongs to $R$.
\end{proof}

\noindent The following lemma can be easily deduced from \eqref{symisom1} and \eqref{symisom2}.
\begin{lemma}\label{symmaps}
Let $M_1,\ldots, M_n$ and $M$ be finitely generated free $R$-modules.
\begin{enumerate}[(a)]
\item\label{symmapsa} The canonical map
\[
\mu\colon M_1\otimes_R\ldots \otimes_R M_n \to \Sym^{n}_R (M_1\oplus\ldots \oplus M_n),
\qquad m_1\otimes\ldots\otimes m_n \mapsto [m_1\otimes\ldots\otimes m_n]
\]
is injective.
\item\label{symmapsb} The following diagram is commutative
\[\xymatrix{
M^{\otimes n}\ar[rr]^\mu\ar[drr]_{[-]} && \Sym_R^{n}(M^{\oplus n})\ar[d]^{\Sym_R^{n}(\id_M^{\oplus n})}\\
&& \Sym_R^{n}(M).
}\]
\end{enumerate}
\end{lemma}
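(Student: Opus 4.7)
Both statements reduce, via repeated application of the canonical isomorphism \eqref{symisom1}, to computations in polynomial algebras. The key observation is that the iterated version
\[
\Sym_R(N_1\oplus\cdots\oplus N_n)\;\cong\;\Sym_R(N_1)\otimes_R\cdots\otimes_R\Sym_R(N_n)
\]
is an isomorphism of graded $R$-algebras, when the right-hand side carries the total-degree grading. Taking the degree-$n$ piece yields the decomposition
\[
\Sym^n_R(N_1\oplus\cdots\oplus N_n)\;=\;\bigoplus_{d_1+\cdots+d_n=n}\Sym^{d_1}_R(N_1)\otimes_R\cdots\otimes_R\Sym^{d_n}_R(N_n).
\]

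For part (a), I would take $N_i=M_i$ and single out the multi-index $(1,\ldots,1)$, whose contribution is the summand $M_1\otimes_R\cdots\otimes_R M_n$. Tracing the identifications, one checks that $\mu$ is exactly the inclusion of this direct summand, whence injectivity. The same fact can be seen concretely: after fixing bases of the $M_i$, the isomorphism \eqref{symisom2} presents $\Sym_R(M_1\oplus\cdots\oplus M_n)$ as a polynomial ring $R[x_{i,j}]$ whose variables are grouped into $n$ disjoint blocks $\{x_{i,\bullet}\}$, one block per summand; then $\mu$ sends a pure tensor of basis vectors to a monomial $x_{1,j_1}\cdots x_{n,j_n}$ with one variable from each block, and such monomials are pairwise distinct, hence $R$-linearly independent.

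For part (b), I would take $N_1=\cdots=N_n=M$ and unwind definitions. By construction, $\mu(m_1\otimes\cdots\otimes m_n)$ is the class in $\Sym^n_R(M^{\oplus n})$ of the pure tensor $(m_1,0,\ldots,0)\otimes(0,m_2,0,\ldots,0)\otimes\cdots\otimes(0,\ldots,0,m_n)$. The map $\Sym^n_R(\id_M^{\oplus n})$ is induced by the summation map $M^{\oplus n}\to M$, which collapses each summand $(0,\ldots,m_i,\ldots,0)$ back to $m_i$; thus it sends $\mu(m_1\otimes\cdots\otimes m_n)$ to $[m_1\otimes\cdots\otimes m_n]$, which is the value of $[-]$ on the same element. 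Commutativity then follows on pure tensors and hence on all of $M^{\otimes n}$ by $R$-linearity.

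\textbf{Main obstacle.} There is no genuine obstacle; the lemma is bookkeeping. The only mild care is in verifying that the direct-summand identification underlying (a) coincides with the intrinsically defined map $\mu$, but after choosing bases this becomes the transparent statement that different multilinear monomials of degree $n$ in $R[x_{i,j}]$ are linearly independent.
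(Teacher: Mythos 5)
Your proof is correct and follows precisely the route the paper indicates: the paper offers no argument beyond the remark that the lemma is ``easily deduced from \eqref{symisom1} and \eqref{symisom2},'' and your write-up simply spells out that deduction (identifying $M_1\otimes_R\cdots\otimes_R M_n$ with the multidegree-$(1,\ldots,1)$ summand of $\Sym_R^n(M_1\oplus\cdots\oplus M_n)$ for (a), and unwinding the summation map $M^{\oplus n}\to M$ for (b)). Nothing to add.
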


\subsection{Completed group algebras}\label{groupalgebras}
Let $G=\varprojlim_i G_i$ be a topologically finitely generated commutative profinite group.
Recall that the \emph{completed group algebra} of $G$ with coefficients in a commutative ring $R$ is defined as
\[
R\llbracket G\rrbracket=\varprojlim\hspace{0.5mm} R[G_i].
\]
We denote by $(-)^\vee\colon R\llbracket G\rrbracket\to R\llbracket G\rrbracket$ the involution induced by inversion on $G$.
In the rest of this section we always consider the coefficient ring $R=\Z_p$.
Let $I(G)\subseteq \Z_p\llbracket G\rrbracket$ be the \emph{augmentation ideal}, i.e. the kernel of the natural map $\Z_p\llbracket G\rrbracket \onto \Z_p$.
More generally, if $Q$ is a quotient of $G$ by an open subgroup, the \emph{relative augmentation ideal} is defined as
\[
I_Q(G):=\ker(\Z_p\llbracket G\rrbracket \ontoo \Z_p[Q]).
\]
Note that the quotients $I_Q(G)^n/I_Q(G)^{n+1}$ are modules over the group ring $\Z_p[Q]$.
If $\Theta$ is an element of $I_Q(G)^{n}$, we write $\partial^{n}_Q(\Theta)$ for its image in $I_Q(G)^n/I_Q(G)^{n+1}$.
The map $G \to I(G)$, $g \mapsto [g] - 1$
induces an isomorphism of $\Z_p$-modules
\begin{align}\label{augmentationisom}
G\otimes_{\widehat{\Z}}\Z_p \xlongrightarrow{\sim} I(G)/I(G)^2,
\end{align}
and for every integer $n\geq 1$ a surjection of $\Z_p$-modules 
\[
\Sym_{\Z_p}^{n}(G\otimes_{\widehat{\Z}}\Z_p)\ontoo I(G)^{n}/I(G)^{n+1}.
\]
When $G$ is a finitely generated free $\Z_p$-module,
a choice of topological generators $\{g_1,\ldots, g_s\}$ determines an isomorphism
\begin{align}\label{iwasawaisom}
\Z_p\llbracket G\rrbracket \xlongrightarrow{\sim} \Z_p\llbracket t_1,\ldots,t_s\rrbracket,\quad [g_i]\mapstoo t_i+1,
\end{align}
 mapping the augmentation ideal to the ideal $(t_1,\ldots,t_s)$.
It follows that the surjective maps
\begin{align}\label{augmentisom}
\Sym_{\Z_p}^{n}(G)\overset{\sim}{\too} I(G)^{n}/I(G)^{n+1} 
\end{align}
are isomorphisms for all $n\geq 1$.
Furthermore, when $G$ is a product $G=H\times Q$ with $Q$ finite, it is easy to see that the canonical map
\[
I(H)^n/I(H)^{n+1} \otimes_{\Z_p}\Q_p \too I(G)^n/I(G)^{n+1} \otimes_{\Z_p} \Q_p
\]
is an isomorphism for all $n\geq 1$.
The following lemma gives a slight generalization of this fact.
\begin{lemma}\label{injective}
Let $G$ be a finitely generated commutative profinite group, $H\le G$ an open subgroup that is a finitely generated $\Z_p$-module, and $Q$ a finite quotient of $G$ such that $H \subseteq \ker(G\onto Q)$.
Then, the canonical $\Q_p[Q]$-linear map
\[
I(H)^{n}/I(H)^{n+1}\otimes_{\Z_p} \Q_p[Q] \too I_Q(G)^{n}/I_Q(G)^{n+1}\otimes_{\Z_p}\Q_p
\]
is injective for all $n\geq 1$.
\end{lemma}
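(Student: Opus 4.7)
The plan is to first identify $I_Q(G)^n/I_Q(G)^{n+1}$ with $(I(K)^n/I(K)^{n+1})\otimes_{\Z_p}\Z_p[Q]$ for $K:=\ker(G\onto Q)$, reducing the claim to the injectivity of the rationalized map $\varphi\colon(I(H)^n/I(H)^{n+1})\otimes\Q_p\to (I(K)^n/I(K)^{n+1})\otimes\Q_p$ induced by the inclusion $H\subseteq K$, and then to recognize $\varphi$ as $\Sym^n_{\Q_p}$ of an isomorphism.

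Since $G$ is commutative, a choice of coset representatives realizes $\Z_p\llbracket G\rrbracket$ as a free right $\Z_p\llbracket K\rrbracket$-module of rank $[G:K]=|Q|$, and the quotient $\Z_p\llbracket G\rrbracket\onto\Z_p[Q]$ is the augmentation of $\Z_p\llbracket K\rrbracket$ on each summand. Hence $I_Q(G)=I(K)\cdot\Z_p\llbracket G\rrbracket$ and, by commutativity, $I_Q(G)^n=I(K)^n\cdot\Z_p\llbracket G\rrbracket$, producing a canonical $\Z_p[Q]$-linear isomorphism $I_Q(G)^n/I_Q(G)^{n+1}\cong(I(K)^n/I(K)^{n+1})\otimes_{\Z_p}\Z_p[Q]$. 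Under this identification, the map of the lemma becomes $\varphi\otimes_{\Q_p}\id_{\Q_p[Q]}$, and since $\Q_p[Q]$ is $\Q_p$-flat, injectivity reduces to injectivity of $\varphi$.

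For any commutative profinite group $J$ whose pro-$p$ part is a finitely generated $\Z_p$-module, I claim that $(I(J)^n/I(J)^{n+1})\otimes_{\Z_p}\Q_p\cong\Sym^n_{\Q_p}(J\otimes_{\widehat{\Z}}\Q_p)$. Writing $J$ as the direct product of its pro-$p$ and prime-to-$p$ parts, and the former via the structure theorem as $\Z_p^d\oplus T$ with $T$ a finite $p$-group, the semisimple $\Q_p$-algebra obtained by base-changing the group ring of the prime-to-$p$ part times $T$ decomposes as a product $\prod_\chi\Q_p(\chi)$ indexed by Galois orbits of characters, with the augmentation singling out the trivial character. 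Thus $\Q_p\llbracket J\rrbracket\cong\prod_\chi\Q_p(\chi)\llbracket\Z_p^d\rrbracket$; the non-trivial $\chi$-factors are unital rings lying entirely inside $I(J)\otimes\Q_p$ and so stabilize under taking powers, leaving $(I(J)^n/I(J)^{n+1})\otimes\Q_p\cong(I(\Z_p^d)^n/I(\Z_p^d)^{n+1})\otimes\Q_p\cong\Sym^n_{\Q_p}(\Q_p^d)$ by \eqref{augmentisom}, which coincides with $\Sym^n_{\Q_p}(J\otimes_{\widehat{\Z}}\Q_p)$.

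Applying this to $J=H$ and $J=K$, the map $\varphi$ becomes $\Sym^n_{\Q_p}$ applied to the canonical map $H\otimes_{\widehat{\Z}}\Q_p\to K\otimes_{\widehat{\Z}}\Q_p$. Since $H$ is pro-$p$ and open in $K$, it is open of finite index in the pro-$p$ part of $K$; both tensor products therefore equal $H\otimes_{\Z_p}\Q_p$, making the map an isomorphism. Hence $\varphi$ is an isomorphism, proving the lemma. The main obstacle is purely organizational: once one has reduced to $\varphi$, the prime-to-$p$ torsion of $K$ and the $p$-primary torsion of both $H$ and $K$ must be dispatched through the semisimple decomposition before \eqref{augmentisom} can be invoked.
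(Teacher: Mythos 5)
The paper states Lemma \ref{injective} without proof, only noting that it is ``a slight generalization'' of the split case $G=H\times Q$, so there is no internal argument to compare against. Your proof is sound in outline and very plausibly the argument the authors had in mind: the opening reduction, identifying $I_Q(G)^{n}/I_Q(G)^{n+1}$ with $(I(K)^{n}/I(K)^{n+1})\otimes_{\Z_p}\Z_p[Q]$ via freeness of $\Z_p\llbracket G\rrbracket$ over $\Z_p\llbracket K\rrbracket$ for $K=\ker(G\onto Q)$, is exactly right, and it correctly reduces everything to the injectivity of $\varphi$.

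One step, however, is asserted rather than justified. You identify $(I(J)^{n}/I(J)^{n+1})\otimes\Q_p$ with $\Sym^{n}_{\Q_p}(J\otimes_{\widehat{\Z}}\Q_p)$ for $J\in\{H,K\}$ and then claim that ``$\varphi$ becomes $\Sym^{n}_{\Q_p}$ applied to the canonical map.'' For that last sentence you need the identification to be \emph{natural} in $J$, but you build it through a non-canonical splitting of the pro-$p$ part and of $\Q_p[T]$ into idempotent factors, so commutativity of the relevant square with $\varphi$ is not automatic. The clean repair is to start from the natural surjection $\Sym^{n}_{\Z_p}(J\otimes_{\widehat{\Z}}\Z_p)\onto I(J)^{n}/I(J)^{n+1}$ already recorded in Section \ref{groupalgebras}; after $\otimes\Q_p$ this is a natural surjection $\Sym^{n}_{\Q_p}(J\otimes_{\widehat{\Z}}\Q_p)\onto (I(J)^{n}/I(J)^{n+1})\otimes\Q_p$, and your idempotent/semisimple decomposition then serves precisely as a dimension count showing both sides have $\Q_p$-dimension $\binom{d+n-1}{n}$. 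This forces the natural surjection to be an isomorphism, and naturality now gives the commuting square for free, so $\varphi$ is $\Sym^{n}_{\Q_p}$ of the isomorphism $H\otimes\Q_p\xrightarrow{\sim}K\otimes\Q_p$. While you are at it, you should say explicitly that the prime-to-$p$ part of $K$ is \emph{finite} --- it is a quotient of $G/H$, which is finite because $H$ is open and pro-$p$ --- since your hypothesis on $J$ (``pro-$p$ part a finitely generated $\Z_p$-module'') does not by itself preclude an infinite prime-to-$p$ part, and the claimed finite product decomposition of a semisimple $\Q_p$-algebra would otherwise not make literal sense.
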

%Note that by definition there is a natural identification
%	\[
%	I(H)^{n}/I(H)^{n+1}\otimes_{\Z} \Q[Q]= I(H)^{n}/I(H)^{n+1}\otimes_{\Z_p} \Q_p[Q].
%	\]
\noindent Now, both the augmentation ideal $I(G)$ and the relative versions $I_Q(G)$ are clearly stable under $(-)^\vee$.
Equation \eqref{augmentationisom} implies that $(-)^\vee$ induces multiplication with $-1$ on $I(G)/I(G)^2$ and, thus, it induces multiplication with $(-1)^{n}$ on $I(G)^{n}/I(G)^{n+1}$.
This observation readily implies the following relative statement:
under the assumptions of Lemma \ref{injective} the following diagram commutes
\begin{equation}\label{diagramSection2.2}
	\xymatrix{
	I(H)^{n}/I(H)^{n+1}\otimes_{\Z_p} \Z_p[Q]\ar[r]\ar[d]_{(-1)^n\id \otimes (-)^\vee} & I_Q(G)^{n}/I_Q(G)^{n+1}\ar[d]^{(-)^\vee}\\
	I(H)^{n}/I(H)^{n+1}\otimes_{\Z_p} \Z_p[Q]\ar[r] &  I_Q(G)^{n}/I_Q(G)^{n+1}.
	}
\end{equation}

%%%%%%%%%%%%%%%%%%%%%%%%%%%%%%%%%%%%%%%%%%%%%%%%%%%%%%
% Plectic points and p-adic L-functions
%%%%%%%%%%%%%%%%%%%%%%%%%%%%%%%%%%%%%%%%%%%%%%%%%%%%%%

\section{Plectic points and $p$-adic $L$-functions}
We begin by explaining how the construction of plectic Stark-Heegner points recovers the $p$-adic uniformization of classical Heegner points as a special case. Then, we recall the $p$-adic Gross--Zagier formula (\cite{plecticHeegner}, Theorem A) relating plectic points to derivatives of certain anticyclotomic $p$-adic $L$-functions, for which we also state precise interpolation formulas.
Note that in this section we work in the setup of Subsection \ref{conjectures}, that is, $E/F$ is an arbitrary quadratic CM extension and Assumption \ref{AssHeegner2} is supposed to hold.
In particular, we never assume that we are in a polyquadratic situation.

\subsection{Plectic Stark--Heegner points}

\noindent We fix an $\mathcal{O}_F$-ideal $\mathfrak{c}$ coprime to the conductor of $A_{/F}$.
Let $E_{\mathfrak{c}}/E$ denote the anticyclotomic extension of conductor $\mathfrak{c}$ defined in (\cite{plecticHeegner}, Section 4.2.1) with Galois group $\mathcal{G}_{\mathfrak{c}}=\Gal(E_{\mathfrak{c}}/ E)$.
Recall that for any character $\chi\colon \mathcal{G}_{\mathfrak{c}}\to \C^\times$ its \emph{conductor} is the maximal divisor $\mathfrak{c}_\chi$ of $\mathfrak{c}$ such that $\chi$ factors through $\mathcal{G}_{\mathfrak{c}}\onto \mathcal{G}_{\mathfrak{c}_\chi}$.
We write $\Q_{\chi}$ for the extension of $\Q$ generated by the values of $\chi$.
With a small change of notation compared to (\cite{plecticHeegner}, Section 4.4), we denote the \emph{plectic Stark--Heegner point} associated to an anticyclotomic character $\chi$ by
\[
\mP_{A,S}^{\chi}\in \widehat{A}(E_S)_{\Q_{\chi}}.
\] 
It follows easily from the construction of plectic Stark--Heegner points that there is an element
\[
\mP_{A,S}^{\mathcal{G}_\mathfrak{c}}\in \widehat{A}(E_S)\otimes_{\Z} \Q[\mathcal{G}_{\mathfrak{c}}]
\]
such that the equality $\chi(\mP_{A,S}^{\mathcal{G}_\mathfrak{c}})= \mP_{A,S}^{\chi}$
holds for any character $\chi\colon\mathcal{G}_{\mathfrak{c}}\to \C^\times$  of conductor $\mathfrak{c}$.
A similar statement also holds for plectic $p$-adic invariants (\cite{plecticHeegner}, Section 4.2): there is an element
\[
\mQ_{A,S}^{\mathcal{G}_\mathfrak{c}}\in \widehat{E}_{S,\otimes}^{-}\otimes_{\Z} \Z[\mathcal{G}_{\mathfrak{c}}]
\]
such that the equality $\chi(\mQ_{A,S}^{\mathcal{G}_\mathfrak{c}})= \mQ_{A,S}^{\chi}$
holds for any character $\chi\colon \mathcal{G}_{\mathfrak{c}}\to \C^\times$ of conductor $\mathfrak{c}$. Moreover, the two elements are related by the following equation 
\begin{equation}\label{pinvariantscomparison}
\phi^{\Tate}_S(\mQ_{A,S}^{\mathcal{G}_\mathfrak{c}})= \pr_S^{-}(\mP_{A,S}^{\mathcal{G}_\mathfrak{c}}).
\end{equation}

\subsection{Plectic points are Heegner points when $\lvert S\rvert=1$ }\label{Heegner}
The construction of plectic Stark--Heegner points generalizes the $p$-adic description of classical Heegner points given in (\cite{CDuniformization}, \cite{MokHeegner}). In this subsection, we recall the precise relation between the two constructions when the set $S$ consists of a single prime $\{\p\}$ and $\chi\colon \mathcal{G}_{\mathfrak{c}}\to \C^\times$ is a character of conductor $\frak{c}$.

\noindent  
We fix embeddings $\iota_\p\colon \overline{\Q}\into \overline{\Q}_p$ and $\iota_\nu\colon \overline{\Q}\into \C$ respectively inducing the $p$-adic prime $\p$ and an Archimedean place $\nu$ of $F$.
Since $\p$ is inert in $E/F$, it splits completely in the anticyclotomic extension $E_{\mathfrak{c}}/E$.
Thus, the embedding $\iota_\p$ restricts to an embedding $\iota_\wp\colon E_{\mathfrak{c}}\hookrightarrow E_{\p}$ inducing an injective homomorphism 
\[
\iota_{A,\p}\colon A(E_{\mathfrak{c}})_{\Q_\chi}\too \widehat{A}(E_{\p})_{\Q_\chi}.
\]
We write $r_\alg(A/E,\chi)$ for the $\Q_\chi$-dimension of the $\chi$-component $A(E_{\mathfrak{c}})^\chi_{\Q_\chi}$ and define $r_\an(A/E,\chi)$ to be the order of vanishing of the $L$-function $L(A/E,\chi,s)$ at $s=1$.
As explained in (\cite{AutomorphicDarmon}, Appendix A.1.2), there exists a Heegner point $P_{A,\mathfrak{c}}\in A(E_{\mathfrak{c}})$ of conductor $\mathfrak{c}$, arising from a Shimura curve associated to the $F$-quaternion algebra ramified exactly at $\p\n^{\flat}$ and all Archimedean places different from $\nu$, such that the image of
\begin{equation}\label{Heegnerpoint}
P_{A}^{\chi} := \sum_{\sigma \in \mathcal{G}_\mathfrak{c}} \chi^{-1}(\sigma)\cdot \sigma(P_{A,\mathfrak{c}})\ \in A(E_{\mathfrak{c}})^{\chi}_{\Q_\chi}
\end{equation}
 under $\iota_{A,\p}$ is a non-zero rational multiple of the plectic Stark--Heegner point associated to the triple $(A,E,\{\p\})$, i.e., there exists $k_{A,\p}^{\chi}\in\Q^\times$ such that:
\begin{equation}\label{comparison}
k_{A,\p}^{\chi}\cdot \mP_{A,\{\p\}}^{\chi}= \iota_{A,\p}\big(P_{A}^{\chi}\big).
\end{equation}

\begin{proposition}\label{Zhang}
Let $\chi\colon \mathcal{G}_\mathfrak{c}\to \C^\times$ be a character of conductor $\mathfrak{c}$.
We have
\[
\mP_{A,\{\p\}}^\chi\neq 0 \quad \iff\quad  r_{\an}(A/E,\chi)=1,
\]
and both statements imply $r_{\alg}(A/E,\chi)=1$. 
Moreover, if $\chi$ is not quadratic,
\[
r_{\an}(A/E,\chi)=1 \quad \implies \quad \mQ_{A,\{\p\}}^\chi\neq 0. 
\] 
\end{proposition}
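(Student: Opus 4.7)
The plan is to reduce both statements to the classical theory of Heegner points on Shimura curves via the comparison \eqref{comparison}, and then to invoke the Gross--Zagier--Zhang formula together with Kolyvagin's Euler system, both in their totally real generalizations. Since $k_{A,\p}^\chi\in\Q^\times$ and $\iota_{A,\p}$ is injective, \eqref{comparison} immediately yields
\[
\mP_{A,\{\p\}}^\chi\neq 0\iff P_A^\chi\neq 0,
\]
so the first two assertions are equivalent to the corresponding claims for the classical Heegner point $P_A^\chi$.

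Under Assumption \ref{AssHeegner2} with $|S|=1$, a standard root number computation shows that the sign of the functional equation of $L(A/E,\chi,s)$ is $-1$, so $L(A/E,\chi,1)=0$ and $r_\an(A/E,\chi)\geq 1$. Zhang's extension of the Gross--Zagier formula to Shimura curves over totally real fields expresses the N\'eron--Tate height of $P_A^\chi$ as an explicit non-zero multiple of $L'(A/E,\chi,1)$; this gives $P_A^\chi\neq 0\iff r_\an(A/E,\chi)=1$, proving the first equivalence. The generalization of Kolyvagin's Euler system argument due to Nekov\'a\v{r} then shows that $P_A^\chi\neq 0\implies r_\alg(A/E,\chi)=1$, completing the second assertion.

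For the last implication, combine \eqref{pinvariantscomparison} with the injectivity of $\phi^{\Tate}_S$ on $\widehat{E}_{S,\otimes}^-$ (stated at the end of Section \ref{conjectures}) to reduce to proving $\pr_S^-(\mP_{A,\{\p\}}^\chi)\neq 0$; then use \eqref{comparison} and the $\sigma_\p$-equivariance of $\iota_{A,\p}$ to reduce further to $\pr_S^-(P_A^\chi)\neq 0$. Since $\p$ is inert in $E/F$ and splits completely in $E_\mathfrak{c}/E$, the decomposition subgroup of $\Gal(E_\mathfrak{c}/F)$ at the place determined by $\iota_\p$ maps isomorphically onto $\Gal(E_\p/F_\p)=\langle\sigma_\p\rangle$; the preimage $\tilde{\sigma}_\p$ of $\sigma_\p$ restricts to the CM-involution $\tau$ on $E$, and because $\tau$ acts on $\mathcal{G}_\mathfrak{c}$ by inversion, conjugation by $\tilde{\sigma}_\p$ sends the $\chi$-isotypic component $A(E_\mathfrak{c})^\chi$ into $A(E_\mathfrak{c})^{\chi^{-1}}$. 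When $\chi$ is not quadratic these two components are linearly independent, hence the two summands of
\[
\pr_S^-(P_A^\chi)=P_A^\chi-a_\p\tilde{\sigma}_\p(P_A^\chi)
\]
cannot cancel, and $\pr_S^-(P_A^\chi)=0\iff P_A^\chi=0$.

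The main obstacle is this last step: the signs and Galois conventions must be tracked with care, particularly to verify that the lift $\tilde{\sigma}_\p\in\Gal(E_\mathfrak{c}/F)$ inside the decomposition group coincides with the element whose action on $\widehat{A}(E_\p)$ underlies the definition of $\pr_S^-$. The non-quadraticity hypothesis is indispensable, because in the quadratic case $A(E_\mathfrak{c})^\chi=A(E_\mathfrak{c})^{\chi^{-1}}$ and the projection $\pr_S^-$ could then a priori annihilate a non-zero $P_A^\chi$.
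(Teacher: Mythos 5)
Your proof is correct and follows the same overall structure as the paper's: reduce the first two assertions to $P_A^\chi$ via \eqref{comparison} and invoke Zhang and Nekov\'a\v{r}, then handle the last implication by observing that $\pr^-_S$ applied to the image of the Heegner point splits into pieces in the $\chi$- and $\chi^{-1}$-eigenspaces of $A(E_\mathfrak{c})_{\Q_\chi}$, which are disjoint when $\chi$ is not quadratic.

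The one place where your route genuinely diverges is the identification of the second summand. You compute $\sigma_\p\circ\iota_{A,\p}(P_A^\chi)=\iota_{A,\p}(\tilde\sigma_\p(P_A^\chi))$ using only the tautological equivariance of $\iota_{A,\p}$ with respect to the decomposition subgroup of $\Gal(E_\mathfrak{c}/F)$ singled out by $\iota_\p$, and then note that $\tilde\sigma_\p$ — being a lift of the CM involution — conjugates $\mathcal{G}_\mathfrak{c}$ to inverses. The paper instead expresses $\sigma_\p\circ\iota_{A,\p}(P_A^\chi)$ as $\iota_{A,\p}(\overline{P_A^\chi})$, where $\overline{(\cdot)}$ is the Archimedean complex conjugation attached to $\iota_\nu$, citing (\cite{CDuniformization}, Theorem 4.7) to make this identification explicit. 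Both choices of lift work equally well for the eigenspace argument, since any lift of the nontrivial element of $\Gal(E/F)$ acts by inversion on $\mathcal{G}_\mathfrak{c}$ and thus carries $A(E_\mathfrak{c})^\chi_{\Q_\chi}$ to $A(E_\mathfrak{c})^{\chi^{-1}}_{\Q_\chi}$; your version is marginally more economical in that it sidesteps the \v{C}erednik--Drinfeld input for this particular step (it is of course still needed to establish \eqref{comparison} itself). The caveat you raise at the end — verifying that $\tilde\sigma_\p$ is indeed the element governing $\pr^-_S$ — is exactly the tautology baked into the definition of $\iota_{A,\p}$ via $\iota_\p$, so it is not in fact an obstacle.
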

\begin{proof}
By equation \eqref{comparison}, the equivalence between the non-triviality of $\mP_{A,\{\p\}}^\chi$ and the analytic rank one statement follows from (\cite{ZhangGZ}, Theorem 1.2.1), while the relation with the algebraic rank is a consequence of the main theorem in \cite{NekovarKolyvagin}. For the second claim, we begin by noting that the involution $A(E_{\mathfrak{c}})\to A(E_{\mathfrak{c}}),\ P \mapsto \overline{P},$ induced by the complex conjugation associated to the Archimedean place $\nu$, yields an isomorphism
\[
A(E_{\mathfrak{c}})^{\chi}_{\Q_\chi} \xlongrightarrow{\sim} A(E_{\mathfrak{c}})^{\chi^{-1}}_{\Q_\chi}.
\]
Then, we observe that equations \eqref{pinvariantscomparison} and \eqref{comparison} imply the equality
\begin{equation}\label{eqHeeg}
k_{A,\p}^{\chi}\cdot\phi^{\Tate}_\wp\big(\mQ_{A,\{\p\}}^\chi\big)
= \iota_{A,\p}\big(P_{A}^{\chi}-a_\wp\cdot\overline{P_{A}^{\chi}}\big)
\end{equation}
because  $\sigma_\wp\circ\iota_{A,\p}\big(P_{A}^{\chi}\big)=\iota_{A,\p}\big(\overline{P_{A}^{\chi}}\big)$ by (\cite{CDuniformization}, Theorem 4.7).
Now, our assumption is that $r_{\an}(A/E,\chi)=1$, and (\cite{ZhangGZ}, Theorem 1.2.1) implies that $P_{A}^\chi\neq 0$.
When $\chi$ is not quadratic, the intersection $A(E_{\mathfrak{c}})^\chi_{\Q_\chi}\cap A(E_{\mathfrak{c}})^{\chi^{-1}}_{\Q_\chi}$ is trivial and the claim follows.
\end{proof}

\begin{remark}\label{signs}
Let $\chi=\cf$ be the trivial character and assume that $r_\mathrm{an}(A/E)=1$. Using equation \eqref{eqHeeg} and (\cite{MokHeegner}, Corollary 4.2), we deduce that $\pr^{-}_{\wp}(\mP_{A,\{\p\}}^\cf)\not=0$ is equivalent to either
\[
\Big(a_\wp=+1\quad\&\quad r_\alg(A/F)=0\Big)\quad \text{or}\quad \Big(a_\wp=-1\quad\&\quad r_\alg(A/F)=1\Big). \]
\end{remark}

\subsection{The anticyclotomic Gross--Zagier formula}\label{antiGZ}
Let $E_{\mathfrak{c},S}$ be the union of the anticyclotomic extensions of $E$ of conductor $\mathfrak{c}\cdot \prod_{\p\in S}\p^n$ for $n\geq 0$.
We put $\mathcal{G}_{\mathfrak{c},S}=\Gal(E_{\mathfrak{c},S}/ E)$ and denote by
$$I_\mathfrak{c}\subseteq \Z_p\llbracket \mathcal{G}_{\mathfrak{c},S}\rrbracket$$
the relative augmentation ideal with respect to the quotient map $\mathcal{G}_{\mathfrak{c},S} \onto \mathcal{G}_{\mathfrak{c}}$.
Restriction of the global Artin homomorphism to the local components at $\p \in S$ induces the homomorphism
\[
\rec_S\colon \bigoplus_{\p\in S}E_\p^{-} \too \mathcal{G}_{\mathfrak{c},S}.
\]
Since $E/F$ is a quadratic CM extensions, the kernel of $\rec_S$ is torsion and its image is an open subgroup of $\ker(\mathcal{G}_{\mathfrak{c},S} \onto \mathcal{G}_{\mathfrak{c}})$.
Moreover, Lemma \ref{symmaps} \eqref{symmapsa} and Lemma \ref{injective} imply that the map
\[
\dd\rec_{S}\colon\widehat{E}_{S,\otimes}^{-}\otimes_{\Z_p}\Z_p[\mathcal{G}_{\mathfrak{c}}]
\too \Sym^{r}_{\Z_p}(\widehat{E}_{\p_1}^{-}\otimes_{\Z_p}\ldots \otimes_{\Z_p} \widehat{E}_{\p_r}^{-})\otimes_{\Z_p}\Z_p[\mathcal{G}_{\mathfrak{c}}]
\too \left(I_{\mathfrak{c}}^{r}/I_{\mathfrak{c}}^{r+1}\right)_\Q
\]
is injective.
Associated to the quadruple $(A,S,E,\mathfrak{c})$ there is the square-root $p$-adic $L$-functions 
\[
\mathscr{L}_S(A/E)_\mathfrak{c} \in \Z\llbracket\mathcal{G}_{\mathfrak{c},S}\rrbracket
\]
constructed in (\cite{BG2}, Definition 5.2) and also in (\cite{plecticHeegner}, Definition 5.7).
On the one hand, the values of this $p$-adic $L$-function at finite order characters not satisfying certain ramification conditions are always equal to zero (see \cite{plecticHeegner}, Theorem 5.10), while the (squares of the) non-trivial values are explicitly calculated in (\cite{BG2}, Theorem 5.8). We recall the interpolation formula in Theorem \ref{thm:specialvalues} below. For the rest of this subsection we consider $\mathscr{L}_S(A/E)_\mathfrak{c}$ as an element of $\Z_p\llbracket\mathcal{G}_{\mathfrak{c},S}\rrbracket$.
Recall that (\cite{BG2}, Theorem 5.5) shows that
\[
\mathscr{L}_S(A/E)_\mathfrak{c}\in I_{\mathfrak{c}}^{r}.
\]
The following is a reformulation of the main theorem of \cite{plecticHeegner}.
\begin{theorem}[(Anticyclotomic Gross-Zagier formula)]\label{GZ}
The equality
\[
2^{r}\cdot \partial_{\mathcal{G}_\mathfrak{c}}^{r}(\mathscr{L}_{S}(A/E)_\mathfrak{c}) = \dd\rec_{S}\big((\mQ_{A,S}^{\mathcal{G}_\mathfrak{c}})^\vee\big)
\]
holds in $(I_\mathfrak{c}^{r}/I_\mathfrak{c}^{r+1})_\Q$.
\end{theorem}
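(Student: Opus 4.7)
The plan is to reduce the Iwasawa-theoretic identity to its character-by-character specializations, which are precisely the content of Theorem A of \cite{plecticHeegner}. Both sides clearly lie in $(I_\mathfrak{c}^r/I_\mathfrak{c}^{r+1})_\Q$: the left hand side because $\mathscr{L}_S(A/E)_\mathfrak{c}\in I_\mathfrak{c}^r$ by (\cite{BG2}, Theorem 5.5), and the right hand side by construction of $\dd\rec_S$.

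First, I would apply Lemma \ref{injective} with $G = \mathcal{G}_{\mathfrak{c},S}$, $Q = \mathcal{G}_\mathfrak{c}$, and $H$ the torsion-free image of the reciprocity map $\rec_S$; the CM hypothesis on $E/F$ guarantees that this image is an open, finitely generated $\Z_p$-submodule of $\ker(\mathcal{G}_{\mathfrak{c},S}\onto \mathcal{G}_\mathfrak{c})$. The resulting injection
\[
I(H)^r/I(H)^{r+1}\otimes_{\Z_p}\Q_p[\mathcal{G}_{\mathfrak{c}}] \hookrightarrow (I_\mathfrak{c}^r/I_\mathfrak{c}^{r+1})_{\Q_p},
\]
combined with the isotypic decomposition of the source under $\overline{\Q}_p[\mathcal{G}_\mathfrak{c}]$, shows that the equality can be verified character-by-character along finite order characters $\chi\colon \mathcal{G}_\mathfrak{c} \to \overline{\Q}_p^\times$.

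Next, for a character $\chi$ of conductor exactly $\mathfrak{c}$, the defining property $\chi(\mQ_{A,S}^{\mathcal{G}_\mathfrak{c}}) = \mQ_{A,S}^\chi$ allows one to rewrite the right hand side as $\dd\rec_S((\mQ_{A,S}^\chi)^\vee)$, while the left hand side becomes $2^r \cdot \partial^r_\chi(\mathscr{L}_S(A/E)_\mathfrak{c})$; their equality is the content of Theorem A of \cite{plecticHeegner}, the factor $2^r$ accounting for the minus-part projector $\pr^-_S$ present in the normalization of the plectic $p$-adic invariant. For $\chi$ of conductor strictly dividing $\mathfrak{c}$, the left hand side vanishes by the ramification constraint of (\cite{plecticHeegner}, Theorem 5.10), and the right hand side vanishes because $\mQ_{A,S}^{\mathcal{G}_\mathfrak{c}}$ has trivial $\chi$-isotypic component by construction. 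Tracking the sign contributed by $(-)^\vee$ through diagram \eqref{diagramSection2.2} ensures that no discrepancy of signs appears on either side of the identity.

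The main technical point, which I expect to be the most involved step, is the verification that applying $\chi$ to the abstract Iwasawa derivative $\partial^r_{\mathcal{G}_\mathfrak{c}}$ genuinely recovers the classical $r$-th order derivative at $\chi$ as used in \cite{plecticHeegner}. This amounts to chasing the identifications \eqref{augmentationisom} and \eqref{augmentisom} through the reciprocity map, and confirming their compatibility with the interpolation defining $\mathscr{L}_S(A/E)_\mathfrak{c}$. Given the care already taken in Section \ref{groupalgebras} to set up the relative augmentation filtration, this should be a matter of bookkeeping rather than genuine difficulty, and the theorem then follows as the compact Iwasawa-theoretic packaging of the character-wise statements.
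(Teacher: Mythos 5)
The paper does not actually prove this theorem: the text preceding it reads ``The following is a reformulation of the main theorem of \cite{plecticHeegner}'' and nothing more, so there is no argument in the paper against which to compare your proposal. What you have written is an attempt to fill in an unwritten proof, and its overall shape --- verify equality of two elements of $(I_\mathfrak{c}^r/I_\mathfrak{c}^{r+1})_\Q$ by decomposing into $\chi$-isotypic components for characters of $\mathcal{G}_\mathfrak{c}$ and invoking the character-wise formula of \cite{plecticHeegner} --- is plausible.

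There are, however, concrete gaps you should be wary of. First, you claim that for $\chi$ of conductor strictly dividing $\mathfrak{c}$ the $\chi$-isotypic component of $\mQ_{A,S}^{\mathcal{G}_\mathfrak{c}}$ vanishes ``by construction.'' The paper only tells you that $\chi(\mQ_{A,S}^{\mathcal{G}_\mathfrak{c}})=\mQ_{A,S}^\chi$ when $\chi$ has conductor \emph{exactly} $\mathfrak{c}$; for smaller conductor the text is silent, and the asserted triviality is something to be verified in \cite{plecticHeegner}, Section 4.2, not taken for granted. Second, the vanishing of $\chi$-isotypic component of $\partial^r_{\mathcal{G}_\mathfrak{c}}(\mathscr{L}_S(A/E)_\mathfrak{c})$ at such $\chi$ is not the same as the vanishing of the interpolated \emph{value} that Theorem 5.10 of \cite{plecticHeegner} addresses --- you need to check that the ramification constraint genuinely kills the derivative class, not just the value. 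Third, your attribution of the $2^r$ factor to the minus-part projector $\pr^-_S$ is unlikely to be right as stated, since $\mQ_{A,S}^{\mathcal{G}_\mathfrak{c}}$ already lives in $\widehat{E}_{S,\otimes}^-\otimes\Z[\mathcal{G}_\mathfrak{c}]$ by definition and no projector is applied to it in the formula; the $2^r$ must come from the normalizations of $\dd\rec_S$ versus the derivative as defined in \cite{plecticHeegner}, which is exactly the step you deferred as ``bookkeeping.'' That deferral is where the content of the reformulation lives, and leaving it implicit means the proposal does not yet constitute a proof.
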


\noindent Interestingly, (\cite{BG2}, Proposition 5.6) allows us to describe the behaviour of $\mathscr{L}_{S}(A/E)_\mathfrak{c}$ under the involution $(-)^\vee$ in terms of the global root number $\varepsilon(A/F)$ of $A_{/F}$ and a product of local root numbers $\varepsilon_S(A/F):=\prod_{\p\in S}\varepsilon_\p(A/F)$.
\begin{proposition}\label{involution}
The equality
\[(\mathscr{L}_{S}(A/E)_\mathfrak{c})^{\vee}= \varepsilon(A/F) \cdot \varepsilon_S(A/F) \cdot \mathscr{L}_{S}(A/E)_\mathfrak{c}\]
holds up to multiplication with an element in $\mathcal{G}_{\mathfrak{c},S}$. 
\end{proposition}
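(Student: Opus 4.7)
The plan is to deduce the statement from its analogue for the squared $p$-adic $L$-function, namely (\cite{BG2}, Proposition 5.6), by extracting a square root. First I would translate the involution into the language of characters: under the pairing between $\Z_p\llbracket \mathcal{G}_{\mathfrak{c},S}\rrbracket$ and continuous characters $\chi$, the map $(-)^\vee$ corresponds to $\chi \mapsto \chi^{-1}$. Since $p$-adic measures are determined by their values on continuous characters, testing against $\chi$ reduces the identity to the statement that, for $\chi$ in a Zariski-dense set, $\mathscr{L}_S(A/E,\chi^{-1})_\mathfrak{c} = \varepsilon(A/F)\cdot\varepsilon_S(A/F)\cdot \chi(g)\cdot \mathscr{L}_S(A/E,\chi)_\mathfrak{c}$ for a suitable $g\in \mathcal{G}_{\mathfrak{c},S}$.

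Second, I would square both sides and apply the interpolation formula (\cite{BG2}, Theorem 5.8) to express the squared values in terms of central $L$-values and explicit local factors. Since $\chi$ is anticyclotomic one has $L(A/E,\chi,s) = L(A/E,\chi^{-1},s)$, so the $L$-values cancel. The transformation of the remaining local factors at the primes of bad reduction under $\chi\mapsto \chi^{-1}$ is precisely the content of (\cite{BG2}, Proposition 5.6) and produces a ratio equal to $(\varepsilon(A/F)\varepsilon_S(A/F))^2\chi(g)^2 = \chi(g)^2$, confirming the equality of the squared values. Hence the ratio of the two sides before squaring is a unit in the total fraction ring of $\Z_p\llbracket \mathcal{G}_{\mathfrak{c},S}\rrbracket$ that squares to $1$, and therefore equals $\pm 1$.

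Third, I would pin down the sign by reinspecting the construction of the square-root $p$-adic $L$-function (\cite{BG2}, Definition 5.2): the sign is controlled by the Atkin--Lehner eigenvalues acting on the test vector at the primes dividing the conductor, and collecting these eigenvalues yields precisely $\varepsilon(A/F)\cdot\varepsilon_S(A/F)$, with the $S$-primes contributing $\varepsilon_S(A/F)$ and the remaining primes combining with the global functional equation to produce $\varepsilon(A/F)$. The main obstacle is the normalization bookkeeping: one has to verify that the square-root $p$-adic $L$-function constructed in (\cite{plecticHeegner}, Definition 5.7) agrees, up to the allowed ambiguity by an element of $\mathcal{G}_{\mathfrak{c},S}$, with the one in (\cite{BG2}, Definition 5.2), and that the Atkin--Lehner eigenvalues really combine into the asserted product of global and local root numbers without stray Gauss-sum or Euler-factor contributions.
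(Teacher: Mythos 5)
The paper gives no argument at all here: the sentence immediately preceding the proposition says that ``(\cite{BG2}, Proposition 5.6) allows us to describe the behaviour of $\mathscr{L}_{S}(A/E)_\mathfrak{c}$ under the involution $(-)^\vee$,'' i.e.\ the cited proposition \emph{is} the functional equation for the square-root $p$-adic $L$-function, and the statement is recorded by direct quotation. Your proposal instead tries to re-derive it from the interpolation formula for the \emph{squared} $L$-function $\mathfrak{L}_S = \mathscr{L}_S\cdot\mathscr{L}_S^\vee$, and this route has two concrete gaps.

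First, the interpolation formula (Theorem \ref{thm:specialvalues}, i.e.\ \cite{BG2}, Theorem 5.8) computes $\chi(\mathfrak{L}_S) = \chi(\mathscr{L}_S)\cdot\chi^{-1}(\mathscr{L}_S)$, which is the \emph{product} of the value at $\chi$ and the value at $\chi^{-1}$, not $\chi(\mathscr{L}_S)^2$. Comparing the evaluations of $\mathfrak{L}_S$ at $\chi$ and at $\chi^{-1}$ therefore gives the trivially true identity $\chi(\mathscr{L}_S)\chi^{-1}(\mathscr{L}_S)=\chi^{-1}(\mathscr{L}_S)\chi(\mathscr{L}_S)$; the anticyclotomic symmetry of the complex $L$-value provides no information about the ratio $\chi^{-1}(\mathscr{L}_S)/\chi(\mathscr{L}_S)$, which is exactly what you need to control. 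Second, even granting that the squares agree, the conclusion ``a unit in the total fraction ring of $\Z_p\llbracket\mathcal{G}_{\mathfrak{c},S}\rrbracket$ that squares to $1$ must equal $\pm 1$'' fails: $\mathcal{G}_{\mathfrak{c},S}$ has a nontrivial finite quotient, so after inverting $p$ the completed group algebra splits into a product of local pieces indexed by characters of the torsion part, and one can change sign independently on each factor. The group of square-roots of $1$ in the total fraction ring is therefore much larger than $\{\pm 1\}$, and your Step 3 would have to pin down the sign on \emph{every} idempotent component, which amounts to reproving \cite{BG2}, Proposition 5.6 from scratch. Your intuition in Step 3 about Atkin--Lehner eigenvalues is the right heuristic for why that proposition is true, but it is not a substitute for citing it, and the squaring detour of Step 2 does not shortcut the work.
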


\begin{corollary}\label{involutiononpoints}
There exists an element $g\in \mathcal{G}_{\mathfrak{c}}$ such that the equality
\[
\mQ_{A,S}^{\chi^{-1}}= \chi(g)\cdot \varepsilon(A/F) \cdot \varepsilon_S(A/F) \cdot (-1)^{r}\cdot \mQ_{A,S}^{\chi}
\]
holds.
In particular, if $\chi=\cf$ is the trivial character, we have
\[
\mQ_{A,S}^{\cf}\neq 0\quad \implies\quad (-1)^r = \varepsilon(A/F) \cdot \varepsilon_S(A/F).
\]
\end{corollary}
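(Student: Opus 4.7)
The plan is to apply the involution $(-)^\vee$ to the anticyclotomic Gross--Zagier formula of Theorem~\ref{GZ} in two different ways, using Proposition~\ref{involution} on one side and the commutative diagram \eqref{diagramSection2.2} on the other side, and then exploit the injectivity of $\dd\rec_S$.

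First, Proposition~\ref{involution} yields an element $\tilde g \in \mathcal{G}_{\mathfrak{c},S}$ with
\[
(\mathscr{L}_{S}(A/E)_\mathfrak{c})^{\vee}= \tilde g \cdot \varepsilon(A/F)\varepsilon_S(A/F) \cdot \mathscr{L}_{S}(A/E)_\mathfrak{c}.
\]
Since $\mathscr{L}_{S}(A/E)_\mathfrak{c}\in I_\mathfrak{c}^r$ and multiplication by $\tilde g$ descends to multiplication by its image $g\in\mathcal{G}_\mathfrak{c}$ on the $\mathcal{G}_\mathfrak{c}$-module $I_\mathfrak{c}^r/I_\mathfrak{c}^{r+1}$, applying $\partial_{\mathcal{G}_\mathfrak{c}}^{r}$ gives
\[
2^{r}\partial_{\mathcal{G}_\mathfrak{c}}^{r}\!\big((\mathscr{L}_S)^\vee\big)= g\cdot\varepsilon(A/F)\varepsilon_S(A/F)\cdot \dd\rec_S\big((\mQ_{A,S}^{\mathcal{G}_\mathfrak{c}})^\vee\big),
\]
where I used Theorem~\ref{GZ} in the last step.

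Next, I compute the same left-hand side by applying $(-)^\vee$ directly to Theorem~\ref{GZ}. For this I use the commutative diagram \eqref{diagramSection2.2} with $G=\mathcal{G}_{\mathfrak{c},S}$, $Q=\mathcal{G}_\mathfrak{c}$, and $H$ equal to the image of $\rec_S$ (an open, finitely generated $\Z_p$-submodule of the kernel of $\mathcal{G}_{\mathfrak{c},S}\onto\mathcal{G}_\mathfrak{c}$). Since $\dd\rec_S$ factors through $I(H)^r/I(H)^{r+1}\otimes\Q_p[\mathcal{G}_\mathfrak{c}]$, the diagram gives
\[
\big(\dd\rec_S((\mQ_{A,S}^{\mathcal{G}_\mathfrak{c}})^\vee)\big)^\vee = (-1)^r\, \dd\rec_S\big(\mQ_{A,S}^{\mathcal{G}_\mathfrak{c}}\big),
\]
so that
\[
2^{r}\partial_{\mathcal{G}_\mathfrak{c}}^{r}\!\big((\mathscr{L}_S)^\vee\big)= (-1)^r\, \dd\rec_S\big(\mQ_{A,S}^{\mathcal{G}_\mathfrak{c}}\big).
\]
Equating the two expressions and invoking the injectivity of $\dd\rec_S$ (cf.\ Lemmas~\ref{symmaps} and \ref{injective}), I obtain the identity
\[
(\mQ_{A,S}^{\mathcal{G}_\mathfrak{c}})^\vee = g^{-1}\cdot \varepsilon(A/F)\varepsilon_S(A/F)\cdot (-1)^r\cdot \mQ_{A,S}^{\mathcal{G}_\mathfrak{c}}
\]
in $\widehat{E}_{S,\otimes}^{-}\otimes_\Z\Q[\mathcal{G}_\mathfrak{c}]$. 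Evaluating at a character $\chi$ of conductor $\mathfrak{c}$ (using $\chi\circ(-)^\vee=\chi^{-1}$ on $\Z[\mathcal{G}_\mathfrak{c}]$) yields the first assertion after renaming $g^{-1}$ as $g$. The trivial character case is then immediate: $\chi(g)=1$ and $\chi^{-1}=\chi$, so $\mQ_{A,S}^{\cf}\neq 0$ forces $(-1)^r=\varepsilon(A/F)\varepsilon_S(A/F)$.

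The one delicate step is the correct bookkeeping of the involution $(-)^\vee$ on the mixed module $\widehat{E}_{S,\otimes}^{-}\otimes_\Z\Z[\mathcal{G}_\mathfrak{c}]$ versus its manifestation on $I_\mathfrak{c}^r/I_\mathfrak{c}^{r+1}$; diagram \eqref{diagramSection2.2} is precisely designed to produce the decisive sign $(-1)^r$ that appears in the statement.
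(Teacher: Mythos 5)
Your proposal is correct and follows essentially the same route as the paper: both apply the involution $(-)^\vee$ to the Gross--Zagier formula of Theorem~\ref{GZ}, invoke Proposition~\ref{involution} on the $p$-adic $L$-function side and diagram \eqref{diagramSection2.2} on the reciprocity-map side, and conclude by injectivity of $\dd\rec_S$. The only cosmetic difference is that you equate two expressions for $2^r\partial^r_{\mathcal{G}_\mathfrak{c}}(\mathscr{L}_S^\vee)$ before specializing at $\chi$, whereas the paper runs the computation directly on $\chi^{-1}(\dd\rec_S(\mQ_{A,S}^{\mathcal{G}_\mathfrak{c}}))$; the ingredients and the resulting identity are identical.
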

\begin{proof}
Thanks to the commutative diagram \eqref{diagramSection2.2} and Theorem \ref{GZ} we have
\begin{align*}
\chi^{-1}(\dd\rec_{S}(\mQ_{A,S}^{\mathcal{G}_\mathfrak{c}}))
&=(-1)^{r}\chi(\dd\rec_{S}(\mQ_{A,S}^{\mathcal{G}_\mathfrak{c}})^{\vee})\\
&=(-2)^{r}\chi(\partial_{\mathcal{G}_\mathfrak{c}}^{r}(\mathscr{L}_{S}(A/E)_\mathfrak{c})).
\end{align*}
Proposition \ref{involution} implies that there is $g \in \mathcal{G}_{\mathfrak{c}}$ such that
\[
\chi(\partial_{\mathcal{G}_\mathfrak{c}}^{r}(\mathscr{L}_{S}(A/E)_\mathfrak{c}))
=\chi(g)\cdot \varepsilon(A/F) \cdot \varepsilon_S(A/F)\cdot  \chi(\partial_{\mathcal{G}_\mathfrak{c}}^{r}\mathscr{L}_{S}(A/E)_\mathfrak{c}^\vee),
\]
and applying Theorem \ref{GZ} once more we obtain the equality
\[
\chi^{-1}(\dd\rec_{S}(\mQ_{A,S}^{\mathcal{G}_\mathfrak{c}})) =\chi(g)\cdot \varepsilon(A/F) \cdot \varepsilon_S(A/F) \cdot (-1)^{r}\cdot \chi(\dd\rec_{S}(\mQ_{A,S}^{\mathfrak{c}})).
\]
The claim now follows from the injectivity of $\dd\rec_{S}$.
\end{proof}

\begin{remark}
Corollary \ref{involutiononpoints} proves one implication of  (\cite{PlecticInvariants}, Conjecture 1.6) in the CM case.
\end{remark}

\subsection{Interpolation formula}
The square-root $p$-adic $L$-function $\mathscr{L}_S(A/E)_\mathfrak{c}$ does not interpolate values of complex $L$-functions but -- as the name suggests -- a choice of their square-roots.
\begin{definition}
The anticyclotomic $p$-adic $L$-function is defined as the product
	\[
	\mathfrak{L}_S(A/E)_\mathfrak{c}:=\mathscr{L}_S(A/E)_\mathfrak{c}\cdot\mathscr{L}_S(A/E)_\mathfrak{c}^\vee\ \in  \Z\llbracket\mathcal{G}_{\mathfrak{c},S}\rrbracket.
	\]
\end{definition}
\noindent It is this $p$-adic $L$-function that interpolates special values of complex $L$-functions.
We introduce some notation to state the interpolation property:
the normalized special value of the complex $L$-function $L(A/E,\chi,s)$ at $s=1$ is given by
\[
L^{\nm}(A/E,\chi):= \frac{L
(A/E, \chi,1)}{L(\pi_A,\Ad,1)}\cdot 
\prod_{\nu\mid\infty} C_\nu(E, \pi_{A},\chi) 
\]
where $L(\pi_A, \Ad, s)$ is the adjoint $L$-function of the $\GL_{2,F}$-automorphic representation $\pi_A$ attached to $A_{/F}$, and $C_\nu(E, \pi_{A},\chi)$ is the Archimedean factor defined in (\cite{FileMartinPitale}, Section 7B).
Since the quadratic extension $E/F$ is CM, the factors $C_\nu(E, \pi_{A},\chi)$ are all equal to a fixed constant $C_\infty$ independent of $\nu$, $A_{/F}$, $E$ and $\chi$.
Note that we always include the Archimedean Euler factors in the definition of the $L$-functions and $\zeta$-functions that occur.

\noindent By our assumptions, the automorphic representation $\pi_A$ admits a Jacquet--Langlands lift to group of units of the totally definite quaternion algebra $B_{/F}$ that is ramified exactly at those finite primes that divide $\n^{\flat}$.
As $B_{/F}$ is totally definite, we may normalize the newform of the Jacquet-Langlands lift such that it takes rational values.
Let $f_A^{B}$ be the rational newform involved in the construction of the $p$-adic $L$-function in \cite{BG2}.
\begin{theorem}\label{thm:specialvalues}
For all locally constant characters $\chi\colon \mathcal{G}_{\mathfrak{c},S} \to \C^\times$ we have
\begin{align*}
\chi(\mathfrak{L}_S(A/E)_\mathfrak{c})=
&\frac{1}{2} \cdot   \langle f_A^{B}, f_A^{B} \rangle_B \cdot   L_{\Sigma_A}(1,\omega_{E/F}) \cdot  L_{\Sigma_A^{\add}}(\pi_A,\Ad,1)\cdot\prod_{\q \in \Sigma_A\setminus S} \hspace{-0.5em}e_{\q}(E/F) \\
&\times \zeta_F^{\Sigma_A}(2)\cdot \sqrt{\frac{\Delta_F}{\Delta_E}}\cdot  L^{\nm}(A/E, \chi,1)\cdot \prod_{\q \mid p_S\mathfrak{c}} C_{\ord_{\p}(\mathfrak{c}),\chi_\p, A_{\p}} 
\end{align*}
where
\begin{itemize}
\item $\langle\cdot,\cdot\rangle_B$ denotes the Petersson inner product on automorphic forms of the unit group of $B_{/F}$,
\item $\Sigma_A$ denotes the set of primes of $F$ at which $A_{/F}$ has bad reduction, and $\Sigma_A^{\add}\subset\Sigma_A$ the subset of primes of additive reduction,
\item $\omega_{E/F}$ is the quadratic character associated to $E/F$,
\item $e_{\q}(E/F)$ denotes the ramification degree of $\q$ in $E/F$,
\item $\Delta_F$ are $\Delta_E$ the absolute values of the discriminants of $F$ and $E$ respectively,
\item $C_{\ord_{\p}(\mathfrak{c}),\chi_\p, A_{F_\p}}\in \Q_\chi$ are constants that only depend on the the $\p$-adic valuation of $\mathfrak{c}$, the restriction of $\chi_\p$ to a decomposition group at $\p$, and the base change $A_{/F_\p}$.
\end{itemize}
\end{theorem}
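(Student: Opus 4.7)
The plan is to reduce the interpolation formula for $\mathfrak{L}_S(A/E)_\mathfrak{c}$ to the square-root interpolation formula of \cite{BG2}, Theorem 5.8, via the factorization $\mathfrak{L}_S(A/E)_\mathfrak{c}=\mathscr{L}_S(A/E)_\mathfrak{c}\cdot\mathscr{L}_S(A/E)_\mathfrak{c}^\vee$.

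First, I would unwind the involution. Since $(-)^\vee$ is induced by inversion on the profinite group $\mathcal{G}_{\mathfrak{c},S}$, any locally constant character $\chi$ satisfies $\chi(x^\vee)=\chi^{-1}(x)$. Combined with the definition of $\mathfrak{L}_S(A/E)_\mathfrak{c}$, this yields
\[
\chi(\mathfrak{L}_S(A/E)_\mathfrak{c})= \chi(\mathscr{L}_S(A/E)_\mathfrak{c})\cdot \chi^{-1}(\mathscr{L}_S(A/E)_\mathfrak{c}).
\]
Since $\mathscr{L}_S(A/E)_\mathfrak{c}$ has integral coefficients and $\chi$ takes values in roots of unity (by local constancy plus compactness of $\mathcal{G}_{\mathfrak{c},S}$), the right-hand side equals $\lvert \chi(\mathscr{L}_S(A/E)_\mathfrak{c})\rvert^2$.

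Next, I would invoke (\cite{BG2}, Theorem 5.8), which provides an explicit formula for $\lvert\chi(\mathscr{L}_S(A/E)_\mathfrak{c})\rvert^2$ in terms of complex $L$-values, local Euler factors at primes in $\Sigma_A$, Archimedean factors, and the Petersson norm $\langle f_A^B,f_A^B\rangle_B$ of the chosen rational newform on the units of $B_{/F}$. The theorem of \cite{BG2} is stated on the nose (modulo minor reorganization of factors) as the central $L$-value of the base change $L(A/E,\chi,s)$ multiplied by local adjustments; one then divides and multiplies by the adjoint $L$-value $L(\pi_A,\Ad,1)$ and the discriminant factor $\sqrt{\Delta_F/\Delta_E}$ to package the absolutely convergent part into the normalized value $L^{\nm}(A/E,\chi,1)$, while the divergent Euler factors at primes of bad reduction are extracted and expressed via $L_{\Sigma_A}(1,\omega_{E/F})$, $L_{\Sigma_A^{\add}}(\pi_A,\Ad,1)$, $\zeta_F^{\Sigma_A}(2)$, and the ramification factors $e_\q(E/F)$.

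The only real work is bookkeeping. In the CM setting, the Archimedean constant $C_\nu(E,\pi_A,\chi)$ is independent of $\nu$, $A$, $E$ and $\chi$ (it only depends on the signatures), which is the reason the product $\prod_{\nu\mid\infty}C_\nu(E,\pi_A,\chi)$ can be absorbed into $L^{\nm}(A/E,\chi,1)$ without introducing data-dependent constants; this must be checked against the Archimedean normalization of \cite{FileMartinPitale}. The local constants $C_{\ord_\p(\mathfrak{c}),\chi_\p,A_\p}$ arise by converting the local test-vector calculations at primes dividing $p_S\mathfrak{c}$ in \cite{BG2} into the required form, depending only on the stated local data. The main obstacle, such as it is, lies in verifying that the normalizations of Haar measures, the Petersson pairing, and the Whittaker models used in \cite{BG2} and in \cite{FileMartinPitale} are compatible so that all constants line up exactly as written; once that matching is done, the theorem follows directly.
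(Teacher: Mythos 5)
Your proposal follows the same route as the paper: reduce $\chi(\mathfrak{L}_S(A/E)_\mathfrak{c})$ to $\lvert\chi(\mathscr{L}_S(A/E)_\mathfrak{c})\rvert^2$ using integrality of the coefficients, then invoke the interpolation formula of \cite{BG2}, Theorem 5.8 together with the explicit Waldspurger formula of \cite{FileMartinPitale} and track the constants. One small point you elide: the formula of \cite{BG2} applies directly only to characters of conductor exactly $\mathfrak{c}$ and with prescribed ramification at $S$, so for a general locally constant $\chi$ one must first reduce via the norm relations (also from \cite{BG2}, Theorem 5.8) before the interpolation identity can be applied -- the paper flags this reduction explicitly, and your write-up should too.
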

\begin{proof}
In case the conductor of $\chi$ is exactly $\mathfrak{c}$ this follows from (\cite{BG2}, Theorem 5.8), and the explicit Waldspurger formula of \cite{FileMartinPitale}.
The general case follows from the norm relations of (\cite{BG2}, Theorem 5.8).
\end{proof}

%%%%%%%%%%%%%%%%%%%%%%%%%%%%%%%%%%%%%%%%%%%%%%%%%%%%%%
% Artin formalism
%%%%%%%%%%%%%%%%%%%%%%%%%%%%%%%%%%%%%%%%%%%%%%%%%%%%%%
\section{Artin formalism for plectic points}\label{ArtinPlectic}
We keep the same notation as in Subsection \ref{antiGZ}.
In addition, we assume that we are in the polyquadratic setup of Subsection \ref{polyquadratic}, and that $\mathfrak{c}=c \cdot \mathcal{O}_F$ for some ideal $c$ of $F_\circ$.
Let $E_{\circ,c}$ denote the ring class field of conductor $c$ and $E_{\circ,c,\{\wp\}}$ the union of the ring class fields of $E_\circ$ of conductor $c\wp^n$ for $n\geq 0$.
We put $G_{c}=\Gal(E_{\circ,c}/ E_\circ)$ and  $G_{c,\wp}=\Gal(E_{\circ,c,\{\wp\}}/ E_\circ)$, so that there are natural maps $\mathcal{G}_\mathfrak{c}\to G_c$ and $\mathcal{G}_{\mathfrak{c},S}\to G_{c,\wp}.$
\begin{assumptions}\label{AssRamification}
	We require that
	\begin{itemize}
		\item every prime divisor of $c$ is completely split in $F/F_\circ$ and unramified in $E/F_\circ$,
		\item the character $\chi\colon\mathcal{G}_\mathfrak{c}\to\C^{\times}$ is the restriction of an anticyclotomic character $\xi\colon G_c\to\C^{\times}$.
	\end{itemize}
\end{assumptions} 
\begin{remark}\label{ExplanationAss}
Since every non-trivial subextension of $F/F_\circ$ is ramified by Assumption \ref{AssFields}, requiring the ideal $c$ to be unramified in $E/F_\circ$ implies that the fields $E$ and $E_{\circ,c}$ are linearly disjoint over $E_\circ$. We impose the total splitting in $F/F_\circ$ of the prime divisors of $c$ just as a simplifying hypothesis for the proof of Proposition \ref{factorizationpLf}.
\end{remark}

\subsection{Factorization of complex $L$-functions}
We write $\mathfrak{G}^\star:=\Hom_\mathrm{gr}(\mathfrak{G},\{\pm 1\})$ for the Pontryagin dual of $\mathfrak{G}=\Gal(F/F_\circ)\cong\Gal(E/E_\circ)$.
For any number field $\Omega$ we write $G_\Omega:=\Gal(\overline{\Q}/\Omega)$ for its absolute Galois group.
\begin{lemma}\label{funrepth}
	There exists an isomorphism of $G_{F_\circ}$-representations:
	\[
	\Ind_{G_E}^{G_{F_\circ}}(\chi)\cong \bigoplus_{\eta\in \mathfrak{G}^\star}\Ind_{G_{E_\circ}}^{G_{F_\circ}}(\xi\cdot\eta).
	\]
\end{lemma}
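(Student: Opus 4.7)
The plan is to combine transitivity of induction with the projection formula, reducing the lemma to the decomposition of the regular representation of $\mathfrak{G}$.

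First I would verify the key preliminary that $\Gal(E/E_\circ)$ is canonically isomorphic to $\mathfrak{G}$. By Assumption \ref{AssFields}, any prime of $F_\circ$ ramifying in $F$ splits (in particular, is unramified) in $E_\circ$, so the extension $E_\circ/F_\circ$ is unramified at every prime that ramifies in $F/F_\circ$. Since every non-trivial subextension of $F/F_\circ$ is ramified, no non-trivial subextension of $E_\circ/F_\circ$ can be contained in $F$. Hence $E_\circ \cap F = F_\circ$, the extensions $E_\circ/F_\circ$ and $F/F_\circ$ are linearly disjoint, and $\Gal(E/E_\circ) \cong \Gal(F/F_\circ) = \mathfrak{G}$.

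Next, I would apply transitivity of induction to write
\[
\Ind_{G_E}^{G_{F_\circ}}(\chi) \cong \Ind_{G_{E_\circ}}^{G_{F_\circ}}\bigl(\Ind_{G_E}^{G_{E_\circ}}(\chi)\bigr).
\]
Since Assumption \ref{AssRamification} provides $\chi = \xi|_{G_E}$, the projection formula yields
\[
\Ind_{G_E}^{G_{E_\circ}}(\chi) \cong \xi \otimes \Ind_{G_E}^{G_{E_\circ}}(\cf),
\]
where $\cf$ denotes the trivial character. The right-hand factor is the regular representation of $\Gal(E/E_\circ) \cong \mathfrak{G}$. Since $\mathfrak{G} \cong (\Z/2\Z)^t$ is abelian, this regular representation decomposes as $\bigoplus_{\eta \in \mathfrak{G}^\star} \eta$, with each $\eta$ inflated to a character of $G_{E_\circ}$ via the surjection $G_{E_\circ} \twoheadrightarrow \mathfrak{G}$.

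Substituting back and using that induction is exact (hence commutes with finite direct sums) gives
\[
\Ind_{G_E}^{G_{F_\circ}}(\chi) \cong \bigoplus_{\eta \in \mathfrak{G}^\star} \Ind_{G_{E_\circ}}^{G_{F_\circ}}(\xi \cdot \eta),
\]
which is the desired isomorphism. The argument is essentially formal; the only non-trivial input is the linear disjointness of $E_\circ$ and $F$ over $F_\circ$, which is the main (mild) obstacle and is dispatched by the ramification hypotheses of Assumption \ref{AssFields}.
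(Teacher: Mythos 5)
Your proof is correct, and it arrives at the same intermediate decomposition as the paper, namely $\Ind_{G_E}^{G_{E_\circ}}(\chi)\cong \bigoplus_{\eta\in \mathfrak{G}^\star} \xi \cdot \eta$, but by a slightly different route. The paper establishes this by observing that each $\xi\cdot\eta$ restricts to $\chi$ on $G_E$, applying Frobenius reciprocity to show that each such character occurs in $\Ind_{G_E}^{G_{E_\circ}}(\chi)$, and then invoking semi-simplicity together with a dimension count to conclude. You instead use the projection formula, $\Ind_{G_E}^{G_{E_\circ}}(\chi)\cong\xi\otimes\Ind_{G_E}^{G_{E_\circ}}(\cf)$, and then identify $\Ind_{G_E}^{G_{E_\circ}}(\cf)$ with the inflation of the regular representation of $\mathfrak{G}$, which decomposes as $\bigoplus_{\eta}\eta$ because $\mathfrak{G}$ is abelian. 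The two arguments are dual in spirit: Frobenius reciprocity tells you which constituents appear, while the projection formula packages the whole decomposition at once; yours is a bit more conceptual, the paper's a bit more hands-on, and neither has any real advantage in generality here. You also spell out the linear disjointness of $E_\circ$ and $F$ over $F_\circ$ needed to identify $\Gal(E/E_\circ)$ with $\mathfrak{G}$, which the paper relegates to Remark~\ref{ExplanationAss}; making it explicit inside the proof is a reasonable and correct addition.
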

\begin{proof}
	First, we claim that 
		\[
		\Ind_{G_E}^{G_{E_\circ}}(\chi)\cong \bigoplus_{\eta\in \mathfrak{G}^\star} \xi \cdot \eta.
		\]
		To prove the claim note that all the characters $\xi \cdot \eta$, for $\eta\in \mathfrak{G}^\star$,
		have the same restriction to $G_E$, namely the character $\chi$.
		By Frobenius reciprocity we have
		\[
		\Hom_{G_{E_\circ}}\big(\xi \cdot \eta, \Ind_{G_E}^{G_{E_\circ}}(\chi)\big) \cong \Hom_{G_E}\big(\chi,\chi\big)\not=0
		\]
		and, therefore, the claim follows from semi-simplicity of representations of finite groups in characteristic zero. The statement of the lemma follows from transitivity of induction.
\end{proof}

\begin{corollary}\label{factorpadicL}
	The following equality of complex $L$-functions holds:
	\[
	L^{\nm}(A/E,\chi,s)=\prod_{\eta\in\mathfrak{G}^\star}L^{\nm}(A^{\eta}_{\circ}/E_\circ,\xi,s).
	\]
\end{corollary}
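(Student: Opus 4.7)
The plan is to apply Artin formalism in conjunction with the Galois-theoretic decomposition of Lemma~\ref{funrepth}. I would proceed by rewriting both sides as $L$-functions of Galois representations over $F_\circ$ and matching the resulting factorizations term by term.

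First, since $A$ is the base change of $A_\circ$ to $F$, the associated $p$-adic Galois representation $V_p(A)$ is the restriction of $V_p(A_\circ)$ to $G_F$. Transitivity of induction then lets me rewrite the unnormalized complex $L$-function as
\[
L(A/E,\chi,s)=L\bigl(V_p(A_\circ)\otimes\Ind_{G_E}^{G_{F_\circ}}(\chi),\,s\bigr),
\]
viewed as an $L$-function over $F_\circ$. Lemma~\ref{funrepth} decomposes the inducing representation as a direct sum indexed by $\mathfrak{G}^{\star}$, and the projection formula gives
\[
V_p(A_\circ)\otimes\Ind_{G_{E_\circ}}^{G_{F_\circ}}(\xi\eta)\cong\Ind_{G_{E_\circ}}^{G_{F_\circ}}\bigl(V_p(A_\circ^\eta)|_{G_{E_\circ}}\otimes\xi\bigr),
\]
where I use the identity $V_p(A_\circ)\otimes\eta=V_p(A_\circ^\eta)$ for the quadratic twist of $A_\circ$ by $\eta$, viewed as a character of $G_{F_\circ}$ via inflation from $\mathfrak{G}$. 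Each induced factor contributes $L(A_\circ^\eta/E_\circ,\xi,s)$, yielding the unnormalized factorization $L(A/E,\chi,s)=\prod_{\eta}L(A_\circ^\eta/E_\circ,\xi,s)$.

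It then remains to verify that the normalization factors match. The Archimedean constants $C_\nu(E,\pi_A,\chi)$ all equal the universal value $C_\infty$, so both sides contribute exactly $C_\infty^{[F:\Q]}=C_\infty^{|\mathfrak{G}^\star|\cdot[F_\circ:\Q]}$. For the adjoint denominator $L(\pi_A,\Ad,1)$, the same Artin/base-change formalism applied to the adjoint representation yields the parallel factorization into $\prod_{\eta}L(\pi_{A_\circ^\eta},\Ad,1)$, matching the denominators on the right-hand side.

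The essential input is Lemma~\ref{funrepth}; everything else is formal. I do not anticipate any serious obstacle: the only step requiring some care is the bookkeeping of normalization factors (the Archimedean constants and the adjoint $L$-values), but this is routine once the Galois-theoretic identity is in place.
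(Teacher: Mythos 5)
Your argument for the unnormalized $L$-function factorization $L(A/E,\chi,s)=\prod_{\eta}L(A_\circ^\eta/E_\circ,\xi,s)$ is correct: it follows from Lemma~\ref{funrepth}, the projection formula, and the identity $V_p(A_\circ)\otimes\eta\cong V_p(A_\circ^\eta)$ for quadratic twists. Your treatment of the Archimedean constants is also fine, since $[F:\Q]=r\cdot[F_\circ:\Q]$ gives $C_\infty^{[F:\Q]}$ on both sides. This is essentially the content that the paper's (one-line) proof intends.

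However, your handling of the adjoint denominator contains a genuine error. You assert that ``the same Artin/base-change formalism applied to the adjoint representation yields the parallel factorization into $\prod_{\eta}L(\pi_{A_\circ^\eta},\Ad,1)$.'' What Artin formalism actually gives, applied to $\Ad V_p(A) = (\Ad V_p(A_\circ))|_{G_F}$, is
\[
L(\pi_A,\Ad,s)=L\bigl(\Ind_{G_F}^{G_{F_\circ}}(\Ad V_p(A_\circ))|_{G_F},s\bigr)=\prod_{\eta\in\mathfrak{G}^\star}L\bigl(\Ad(V_p(A_\circ))\otimes\eta,\,s\bigr).
\]
This is not the same as $\prod_{\eta}L(\pi_{A_\circ^\eta},\Ad,s)$. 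The adjoint representation is \emph{invariant} under twisting by a character: $\Ad(V\otimes\eta)=\Ad(V)$ for any 2-dimensional $V$ and any character $\eta$, since $\Sym^2(V\otimes\eta)\otimes\det(V\otimes\eta)^{-1}=\Sym^2V\otimes\det(V)^{-1}$. Consequently $L(\pi_{A_\circ^\eta},\Ad,s)=L(\Ad V_p(A_\circ^\eta),s)=L(\Ad V_p(A_\circ),s)=L(\pi_{A_\circ},\Ad,s)$ for every $\eta$, so the right-hand side of your claimed factorization is simply $L(\pi_{A_\circ},\Ad,1)^r$. Meanwhile the left-hand side factors as a product of $r$ \emph{distinct} twisted adjoint $L$-values $L(\Ad(V_p(A_\circ))\otimes\eta,1)$, and these are generically not all equal. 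You have implicitly conflated $\Ad(V)\otimes\eta$ with $\Ad(V\otimes\eta)$; the first is not isomorphic to $\Ad(V)$, the second is. So your claimed matching of normalization denominators does not hold, and this step needs to be either justified differently or the statement weakened to an equality up to a constant (which would still suffice for the way the corollary is used in Proposition~\ref{factorizationpLf}, where an overall rational constant $C$ is allowed).
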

\begin{proof}
This is a direct consequence of Lemma \ref{funrepth} and Artin formalism for complex $L$-functions.
\end{proof}

\subsection{Factorization of $p$-adic $L$-functions}
For each $\eta\in\mathfrak{G}^\star$, the quadruple $(A_\circ^{\eta},E_\circ,\wp,c)$ fulfils the conditions of Subsection \ref{antiGZ} and thus, we can define the $p$-adic L-functions
\[
\mathscr{L}_{\{\wp\}}(A_\circ^{\eta}/E_\circ)_c,\ \mathfrak{L}_{\{\wp\}}(A_\circ^{\eta}/E_\circ)_c \in \Z\llbracket G_{c,\wp}\rrbracket.
\]
Let $R$ be a commutative ring.
The homomorphism $\mathcal{G}_{\mathfrak{c},S}\too G_{c,\wp}$ induces the restriction map
\[
\res_{S,\wp}\colon R\llbracket \mathcal{G}_{\mathfrak{c},S} \rrbracket \too R\llbracket G_{c,\wp}\rrbracket
\]
between completed group algebras.

\begin{proposition}\label{factorizationpLf}
There is a constant $C\in\Q^\times$ such that the equality
	\[
	\res_{S,\wp}\left(\mathfrak{L}_S(A/E)_\mathfrak{c}\right)=C\cdot \prod_{\eta\in\mathfrak{G}^\star}\mathfrak{L}_{\{\wp\}}(A_\circ^{\eta}/E_\circ)_c.
	\]
holds in $\Z\llbracket G_{c,\wp}\rrbracket$.
\end{proposition}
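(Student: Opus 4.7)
The plan is to reduce the identity to a comparison of values at finite-order characters via a density argument, then use the interpolation formula of Theorem \ref{thm:specialvalues} together with an Artin-type factorization of the complex $L$-functions extending Corollary \ref{factorpadicL}.

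Since $G_{c,\wp}$ is topologically finitely generated and commutative, \eqref{iwasawaisom} (applied to the maximal $\Z_p$-free quotient after modding out the finite torsion part) implies that finite-order characters are Zariski-dense in the continuous $\overline{\Q}_p$-character variety. Hence it suffices to produce a single $C\in\Q^\times$ such that
\[
\xi\bigl(\res_{S,\wp}(\mathfrak{L}_S(A/E)_\mathfrak{c})\bigr) = C \cdot \prod_{\eta \in \mathfrak{G}^\star} \xi\bigl(\mathfrak{L}_{\{\wp\}}(A_\circ^\eta/E_\circ)_c\bigr)
\]
holds for every finite-order character $\xi\colon G_{c,\wp}\to\C^\times$.

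Given such $\xi$, I would pull it back to a character $\widetilde\xi$ of $\mathcal{G}_{\mathfrak{c},S}$ via the surjection $\mathcal{G}_{\mathfrak{c},S}\twoheadrightarrow G_{c,\wp}$ (which exists because the linear disjointness of $E_{\circ,c,\{\wp\}}$ and $E$ over $E_\circ$, guaranteed by Assumptions \ref{AssFields} and \ref{AssRamification}, identifies $\Gal(E_{\circ,c,\{\wp\}}\cdot E/E)$ with $G_{c,\wp}$). Theorem \ref{thm:specialvalues} then expands each side as a product of global factors, local constants $C_{\ord_\p(\mathfrak{c}),\widetilde\xi_\p,A_\p}$ (resp.\ $C_{\ord_\wp(c),\xi_\wp,(A_\circ^\eta)_\wp}$), and normalized $L$-values. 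The proof of Lemma \ref{funrepth} applies verbatim with $\widetilde\xi$ replacing $\chi$, since $\widetilde\xi=\xi|_{G_E}$ and every $\eta\in\mathfrak{G}^\star$ is trivial on $G_E$; this yields the factorization $L^{\nm}(A/E,\widetilde\xi,1)=\prod_\eta L^{\nm}(A_\circ^\eta/E_\circ,\xi,1)$, so the $L$-value contributions cancel. The local factors also pair off bijectively: the total splitting of $\wp$ and of every prime divisor of $c$ in $F/F_\circ$ identifies $F_\p=F_{\circ,\wp}$, trivializes $\eta$ locally (so $(A_\circ^\eta)_\wp=A_{\circ,\wp}=A_\p$), and matches $\widetilde\xi_\p$ with $\xi_\wp$; the $r=|S|=|\mathfrak{G}^\star|$ copies on each side correspond in pairs.

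What remains is a ratio of $\xi$-independent global factors: Petersson norms, the imprimitive values $L_{\Sigma_A}(1,\omega_{E/F})$ and $\zeta_F^{\Sigma_A}(2)$, adjoint $L$-values, ramification degrees, and the discriminant square-roots $\sqrt{\Delta_F/\Delta_E}$. The principal obstacle will be verifying that this ratio lies in $\Q^\times$. By Assumption \ref{AssFields} every prime ramifying in $F/F_\circ$ splits in $E_\circ/F_\circ$, and a base-change calculation yields $N_{E_\circ/\Q}(\mathfrak{D}_{E/E_\circ})=N_{F_\circ/\Q}(\mathfrak{D}_{F/F_\circ})^2$; combined with the tower formula this reduces the discriminant ratio to $N_{F_\circ/\Q}(\mathfrak{D}_{F/F_\circ})^{-1/2}$, irrational in general. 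Rationality of $C$ must therefore come from a compensating $\sqrt{N_{F_\circ/\Q}(\mathfrak{D}_{F/F_\circ})}$ in the ratio of Petersson norms of the Jacquet--Langlands transfers of $\pi_A$ and of the $\pi_{A_\circ^\eta}$, a controlled outcome of the normalization conventions for quaternionic newforms under quadratic base change. Rationality of the remaining pieces follows from Siegel--Klingen for $\zeta_F^{\Sigma_A}(2)/\zeta_{F_\circ}^{\Sigma_{A_\circ}}(2)^r$, from the compatibility of adjoint $L$-values with base change, and from a direct bookkeeping of finitely many Euler factors and ramification contributions at the bad primes.
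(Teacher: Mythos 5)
Your reduction to comparison of interpolation values at finite-order characters, the pullback of $\xi$ along $\mathcal{G}_{\mathfrak{c},S}\twoheadrightarrow G_{c,\wp}$, the use of Corollary~\ref{factorpadicL} (or rather, its proof applied to $\widetilde{\xi}$) to cancel the normalized $L$-values, and the pairing off of the local constants via total splitting of $c\wp$ in $F/F_\circ$ all match the paper's argument. The gap is entirely in the final step, where you treat the $\chi$-independent residual factor.

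You write that rationality of $\zeta_F^{\Sigma_A}(2)/\zeta_{F_\circ}^{\Sigma_{A_\circ}}(2)^r$ ``follows from Siegel--Klingen.'' It does not: the Siegel--Klingen theorem concerns the (finite part of the) Dedekind zeta function at negative integers, and $\zeta_F(2)$ is transcendental. More importantly, the zeta ratio and the discriminant ratio cannot be dispatched separately. The correct move is to apply the functional equation $\zeta_F(2)\sim \Delta_F^{-3/2}\zeta_F(-1)$ to both numerator and denominator, which converts the residual factor into
\[
\sqrt{\frac{\Delta_F}{\Delta_E}}\cdot \frac{\zeta_F(-1)\,\Delta_F^{-3/2}}{\zeta_{F_\circ}(-1)^r\,\Delta_{F_\circ}^{-3r/2}} \ =\ \Delta_F^{-1}\cdot\Delta_E^{-1/2}\cdot\Delta_{F_\circ}^{3r/2}\cdot\frac{\zeta_F(-1)}{\zeta_{F_\circ}(-1)^r}.
\]
The Archimedean $\Gamma$-factors of $\zeta_F$ and $\zeta_{F_\circ}^{r}$ agree since $[F:\Q]=r[F_\circ:\Q]$ and both fields are totally real, and the finite parts at $-1$ are rational by Siegel--Klingen. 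The $\sqrt{\Delta_F}$ has thus been absorbed into $\Delta_F^{-1}$, and $\Delta_{F_\circ}^{3r/2}$ is an integer because $r=2^t$ is even; so the only thing left to check is that $\Delta_E$ is a perfect square. This follows from the tower formula together with $\Delta_{E/F_\circ}=\Delta_{E_\circ/F_\circ}^{\,r}\cdot\Delta_{F/F_\circ}^{\,2}$, valid because $\Delta_{E_\circ/F_\circ}$ and $\Delta_{F/F_\circ}$ are coprime by Assumption~\ref{AssFields}, and $[E:F_\circ]=2r$ is even. Your own identity $N_{E_\circ/\Q}(\mathfrak{D}_{E/E_\circ})=N_{F_\circ/\Q}(\mathfrak{D}_{F/F_\circ})^2$ is correct and in fact gives the same conclusion if you combine it with $\Delta_{E_\circ}^r$ being a square; but because you did not invoke the functional equation, you were left staring at $\sqrt{\Delta_F/\Delta_E}\sim N_{F_\circ/\Q}(\mathfrak{D}_{F/F_\circ})^{-1/2}$, which is generically irrational. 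Your proposed rescue---that a compensating $\sqrt{N_{F_\circ/\Q}(\mathfrak{D}_{F/F_\circ})}$ must hide in the ratio of Petersson norms---is both unsupported and unnecessary: the first line of the interpolation formula is already rational and plays no role in the cancellation.
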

\begin{proof}
To prove the statement it is enough to use Theorem \ref{thm:specialvalues} to show equality of both sides after evaluation at every finite order character of $G_{c,\wp}$.
We note that the first line of the interpolation formula is a non-zero rational number and, thus, we may neglect it.
By Corollary \ref{factorpadicL}, the normalized special values on both sides cancel out.
In addition, the local constants $C_{\ord_{\p}(\mathfrak{c}),\chi_\p, A_{\p}}$ cancel out as well because we assumed that all the primes of $F_\circ$ dividing $c\wp$ are totally split in $F/F_\circ$.
As we are taking a product over $r=2^t$ factors on the right hand side, we see that it suffices to show that
\[
\sqrt{\frac{\Delta_F}{\Delta_E}}\cdot \frac{\zeta_F(2)}{\zeta_{F_\circ}(2)^{r}}
\]
is a rational number.
Using the functional equation of the Dedekind zeta function, we may rewrite this term as
\[
\sqrt{\frac{\Delta_F}{\Delta_E}}\cdot \frac{\zeta_F(-1)\Delta_{F}^{-3/2}}{\zeta_{F_\circ}(-1)^r\Delta_{F_\circ}^{-3r/2}}.
\]
By the Klingen--Siegel Theorem (\cite{Siegel}, \cite{Klingen}), the special values of the Dedekind zeta function (excluding the Archimedean Euler factors) at negative integers are rational.
The Archimedean factors of $\zeta_F(s)$ and $\zeta_{F_\circ}(s)^{r}$ agree, so we are left to show that $\Delta_E\in \Q^\times$ is a square.
Let $\Delta_{E/F_\circ}$ denote the relative discriminant of $E/F_\circ$. The formula for relative discriminants in towers gives
\[
\Delta_E= \No_{F_\circ/\Q}(\Delta_{E/F_\circ}) \cdot \Delta_{F_\circ}^{[E :F_\circ]}.
\]
As $[E:F_\circ]$ is a power of $2$, it suffices to show that $\Delta_{E/F_\circ}$ is a square.
At this point we conclude the argument because (\cite{discriminants}, Theorem 1.2) implies
\[
\Delta_{E/F_\circ}=\Delta_{E_\circ/F_\circ}^{r} \cdot \Delta_{F/F_\circ}^2
\]
since the relative discriminants $\Delta_{E_\circ/F_\circ}$ and $\Delta_{F/F_\circ}$ are coprime by Assumption \ref{AssFields}.
\end{proof}

\subsection{Factorization of plectic invariants}
Recall that by Assumption \ref{AssRamification} the character $\chi$ is the restriction to $\mathcal{G}_\mathfrak{c}$ of an anticyclotomic character $\xi\colon G_c\to\C^\times$.
For every $\eta\in\mathfrak{G}^\star$ we can consider the \emph{plectic $p$-adic invariant} 
\[
\mQ_{A_\circ^{\eta},\{\wp\}}^{\xi} \in   \widehat{E}_{\circ,\wp}^{-}\otimes_{\Z}\Q_{\xi}
\]
attached to the triple $(A_\circ^{\eta}/F_\circ, \xi, \{\wp\})$. 
As in Subsection \ref{mainresults}, we use the fixed identifications
$E_\p^{-}=E_{\circ,\wp}^{-}$ for $\p\in S$
to define the \emph{norm map}
\[
\No_{S/\wp}\colon \widehat{E}_{S,\otimes}^{-}\too (E_{\circ,\wp}^{-})^{\otimes r}
\too \Sym^{r}_{\Z_p} (\widehat{E}_{\circ,\wp}^{-}).
\]

\begin{corollary}\label{keyidentitysquare}
There exists a constant $C_\chi\in\Q_\chi^\times$ such that the equality
	\[
	\No_{S/\wp}\big(\mQ_{A,S}^\chi\big)^2=C_\chi \cdot\prod_{\eta\in\mathfrak{G}^\star} \big(\mQ_{A^{\eta}_\circ,\{\wp\}}^\xi\big)^2
	\]
	 holds in $\Sym^{2r}_{\Z_p}(\widehat{E}_{\circ,\wp}^{-})_{\Q_{\chi}}$. 
	Moreover, $\No_{S/\wp}(\mQ_{A,S}^\chi)\neq 0$ implies $\mQ_{A^{\eta}_\circ,\{\wp\}}^\xi\neq 0$ for all $\eta\in\mathfrak{G}^\star$.
\end{corollary}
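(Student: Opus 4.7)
The plan is to compute the $2r$-th graded derivative of $\mathfrak{L}_S(A/E)_\mathfrak{c}$ in two different ways and to compare them via the restriction map $\res_{S,\wp}$. First, I would write $\mathfrak{L}_S=\mathscr{L}_S(A/E)_\mathfrak{c}\cdot\mathscr{L}_S(A/E)_\mathfrak{c}^\vee$ with both factors in $I_\mathfrak{c}^r$, so that the product rule in the graded ring yields $\partial^{2r}_{\mathcal{G}_\mathfrak{c}}(\mathfrak{L}_S)=\partial^{r}_{\mathcal{G}_\mathfrak{c}}(\mathscr{L}_S)\cdot\partial^{r}_{\mathcal{G}_\mathfrak{c}}(\mathscr{L}_S^\vee)$ in $I_\mathfrak{c}^{2r}/I_\mathfrak{c}^{2r+1}$. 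Then, applying Theorem \ref{GZ} to both factors, using diagram \eqref{diagramSection2.2} to rewrite the involuted factor as $(-1)^r\cdot\dd\rec_S(\mQ^{\mathcal{G}_\mathfrak{c}}_{A,S})$, evaluating at $\chi$, and invoking Corollary \ref{involutiononpoints} to convert $\mQ^{\chi^{-1}}_{A,S}\cdot\mQ^{\chi}_{A,S}$ into a scalar multiple of $(\mQ^{\chi}_{A,S})^{2}$, I would arrive at an identity
\[
\chi\big(\partial^{2r}_{\mathcal{G}_\mathfrak{c}}\mathfrak{L}_S\big)=\alpha_\chi\cdot\big[\dd\rec_S(\mQ_{A,S}^{\chi})\big]^{2},\qquad\alpha_\chi\in\Q_\chi^{\times},
\]
where the square is computed in $\Sym^{2r}_{\Z_p}(\widehat{E}_{\p_1}^-\oplus\dots\oplus\widehat{E}_{\p_r}^-)_{\Q_\chi}$ via the augmentation isomorphism \eqref{augmentisom}.

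Next, I would run the analogous computation on each triple $(A_\circ^\eta,E_\circ,\{\wp\})$ to obtain, for every $\eta\in\mathfrak{G}^\star$, an identity $\xi(\partial^2_{G_c}\mathfrak{L}^\eta_{\{\wp\}})=\beta_{\xi,\eta}\cdot[\dd\rec_\wp(\mQ^\xi_{A_\circ^\eta,\{\wp\}})]^2$ with $\beta_{\xi,\eta}\in\Q_\chi^{\times}$. Multiplying these identities over $\eta\in\mathfrak{G}^\star$ and using that $\prod_\eta\mathfrak{L}^\eta_{\{\wp\}}\in I_{c}^{2r}$ together with the multiplicativity of $\partial^{2r}_{G_c}$ on such a product, I would get
\[
\xi\Big(\partial^{2r}_{G_c}\prod_{\eta\in\mathfrak{G}^\star}\mathfrak{L}^\eta_{\{\wp\}}\Big)=\beta_\xi\cdot\prod_{\eta\in\mathfrak{G}^\star}\big[\dd\rec_\wp(\mQ_{A_\circ^\eta,\{\wp\}}^{\xi})\big]^{2}\qquad\text{in }\Sym^{2r}_{\Z_p}(\widehat{E}_{\circ,\wp}^-)_{\Q_\chi}.
\]

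To bridge the two computations I would observe that $\res_{S,\wp}$ respects augmentation filtrations and, on the graded pieces, corresponds under \eqref{augmentisom} and the identifications $E_\p=E_{\circ,\wp}$ coming from the complete splitting of $\wp$ in $F/F_\circ$ to the map on symmetric algebras induced by the sum map $\bigoplus_{\p\in S}\widehat{E}_{\p}^{-}\to\widehat{E}_{\circ,\wp}^{-}$. By Lemma \ref{symmaps}, the composition of this map with the image of $\dd\rec_S$ restricted to the subspace coming from $\widehat{E}_{S,\otimes}^{-}$ is precisely $\No_{S/\wp}$; hence $[\dd\rec_S(\mQ_{A,S}^\chi)]^2$ is sent to $[\No_{S/\wp}(\mQ_{A,S}^\chi)]^2$. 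Combining the two computations via $\chi(f)=\xi(\res_{S,\wp}(f))$ (Assumption \ref{AssRamification}) and Proposition \ref{factorizationpLf} yields the claimed identity with $C_\chi=\alpha_\chi^{-1}\beta_\xi C\in\Q_\chi^{\times}$. For the ``moreover'' assertion, $\Sym_{\Z_p}(\widehat{E}_{\circ,\wp}^-)_{\Q_\chi}$ is a polynomial algebra over $\Q_\chi$ by \eqref{symisom2}, hence an integral domain, so non-vanishing of the left-hand side forces each factor on the right to be non-zero.

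The hard part will be carefully tracking the sign $(-1)^r$ and the root-number constants produced by the involution $(-)^\vee$ through diagram \eqref{diagramSection2.2} and Corollary \ref{involutiononpoints}, and verifying the second identification, namely that the restriction map on the top-degree graded piece really becomes $\No_{S/\wp}$ once composed with the symmetric-algebra embedding of Lemma \ref{symmaps}.
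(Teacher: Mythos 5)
Your proposal is correct and follows essentially the same route as the paper: restrict the factorization of $\mathfrak{L}_S$ from Proposition \ref{factorizationpLf}, pass to $\partial^{2r}$, translate via Theorem \ref{GZ} and the $\dd\rec$-compatibility diagram built from Lemma \ref{symmaps}, and read off the square identity. The only cosmetic difference is that you route the involution through Corollary \ref{involutiononpoints} (applying Theorem \ref{GZ} separately to $\mathscr{L}_S$ and $\mathscr{L}_S^\vee$), whereas the paper invokes Proposition \ref{involution} once to reduce $\mathfrak{L}_S$ to $\pm g\cdot\mathscr{L}_S^2$ before applying Theorem \ref{GZ}; both yield the same constant up to normalization.
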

\begin{proof}
	The second claim follows from the first because $\Sym_{\Z_p}(\widehat{E}_{\circ,\wp}^{-})_{\Q_\chi}$ has no nilpotent elements.
	By Proposition \ref{factorizationpLf} and Proposition \ref{involution} there exists $C\in\Q^\times$ such that the equality
	\[\res_{S,\wp}\left(\mathscr{L}_S(A/E)_\mathfrak{c}\right)^2=C\cdot \prod_{\eta\in\mathfrak{G}^\star}\mathscr{L}_{\{\wp\}}(A_\circ^{\eta}/E_\circ)_c^2\]
	holds up to multiplication by an element in $G_{c,\wp}$.
	Lemma \ref{symmaps} \eqref{symmapsb} implies that the diagram
	\[\xymatrix{
	\widehat{E}_{S,\otimes}^{-}\ar[rr]^{\dd\rec_{S}}\ar[d]_{\No_{S/\wp}} && (I_\mathfrak{c}^{r}/I_\mathfrak{c}^{r+1})_\Q\ar[d]^{\res_{S,\wp}}\\
	\widehat{E}_{\circ,\wp}^{-}\ar[rr]^{\dd\rec_{\wp}} && (I_c^{r}/I_c^{r+1})_\Q
	}\]
	commutes. Thus, we may conclude by invoking Theorem \ref{GZ}.
\end{proof}

\noindent Assume that $p$ does not split in the field $\Q_\chi$, i.e., that $\Q_{\chi,p}:= \Q_{\chi}\otimes_\Q\Q_p$ is a field.
Then, Lemma \ref{squaremap} allows us to deduce the following corollary.
\begin{corollary}\label{keyidentity}
Assume that $p$ does not split in $\Q_\chi$, then
there exists a square-root $\sqrt{C_\chi}\in \Q_{\chi,p}^\times$ of $C_\chi\in \Q_\chi$ such that the equality
\[
\No_{S/\wp}(\mQ_{A,S}^\chi)=\sqrt{C_\chi} \cdot\prod_{\eta\in\mathfrak{G}^\star} \mQ_{A^{\eta}_\circ,\{\wp\}}^\xi
\] 
holds in $\Sym^{r}_{\Z_p}(\widehat{E}_{\circ,\wp}^{-})_{\Q_{\chi}}$. Moreover, $\No_{S/\wp}(\mQ_{A,S}^\chi)\neq 0$ if and only if $\mQ_{A^{\eta}_\circ,\{\wp\}}^\xi\neq 0$ for all $\eta\in\mathfrak{G}^\star$.
\end{corollary}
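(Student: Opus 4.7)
The plan is to deduce Corollary \ref{keyidentity} directly from Corollary \ref{keyidentitysquare} by extracting square roots in the symmetric algebra, with Lemma \ref{squaremap} doing the essential work. The hypothesis that $p$ does not split in $\Q_\chi$ enters precisely to ensure that $\Q_{\chi,p}=\Q_{\chi}\otimes_{\Q}\Q_{p}$ is a field, hence an integrally closed domain, which is the standing assumption on the coefficient ring in Lemma \ref{squaremap}.

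Concretely, I would set $R:=\Q_{\chi,p}$ and $M:=\widehat{E}_{\circ,\wp}^{-}\otimes_{\Z_p}\Q_{\chi,p}$. Since $\widehat{E}_{\circ,\wp}^{-}$ is a finitely generated $\Z_p$-module, $M$ is a finite-dimensional $R$-vector space, hence a finitely generated free $R$-module. Because symmetric powers commute with flat base change, the ambient space $\Sym^{r}_{\Z_p}(\widehat{E}_{\circ,\wp}^{-})_{\Q_\chi}$ is canonically identified with $\Sym^{r}_{R}(M)$, and analogously in degree $2r$. Applying Lemma \ref{squaremap} to the elements $x=\No_{S/\wp}(\mQ_{A,S}^\chi)$, $y=\prod_{\eta\in\mathfrak{G}^\star}\mQ_{A^{\eta}_\circ,\{\wp\}}^\xi$, and to the nonzero constant $C=C_\chi\in R^\times$ --- with the identity $x^{2}=C_{\chi}\cdot y^{2}$ furnished by Corollary \ref{keyidentitysquare} --- then produces a square-root $\sqrt{C_\chi}\in R$ such that $x=\sqrt{C_\chi}\cdot y$, which is exactly the claimed equality.

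For the equivalence in the ``moreover'' clause, the forward implication is already contained in Corollary \ref{keyidentitysquare}. The reverse implication follows from the identity just obtained: if every factor $\mQ_{A^{\eta}_\circ,\{\wp\}}^\xi$ is nonzero, then their product $y$ is nonzero in $\Sym_{R}(M)$, which is an integral domain by the polynomial identification \eqref{symisom2} invoked in the proof of Lemma \ref{squaremap}; combined with $\sqrt{C_\chi}\neq 0$, this forces $x\neq 0$. (The degenerate case where some factor vanishes is consistent: both sides of the displayed identity then collapse to zero, and the value of $\sqrt{C_\chi}$ is immaterial.)

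I do not anticipate any genuine obstacle here: once Corollary \ref{keyidentitysquare} is in hand the argument is purely formal, and the only conceptual point is recognizing that the non-splitting hypothesis on $p$ in $\Q_\chi$ is exactly what makes the coefficient ring $\Q_{\chi,p}$ an integral domain --- and therefore renders Lemma \ref{squaremap} applicable. Were $p$ to split, $\Q_{\chi,p}$ would decompose into a product of fields, the factored constant $C_\chi$ might admit a square-root in only some factors, and the square-extraction step could fail in the naive sense.
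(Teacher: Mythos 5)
Your proof is correct and coincides with the paper's (extremely terse) argument: the paper simply asserts that Lemma~\ref{squaremap} applied to Corollary~\ref{keyidentitysquare} yields the result, with the non-splitting hypothesis on $p$ guaranteeing that $\Q_{\chi,p}$ is a field, and you have spelled out exactly these steps, including the base-change identification $\Sym^{r}_{\Z_p}(\widehat{E}_{\circ,\wp}^{-})_{\Q_\chi}\cong\Sym^{r}_{\Q_{\chi,p}}(\widehat{E}_{\circ,\wp}^{-}\otimes_{\Z_p}\Q_{\chi,p})$ and the use of integrality of $\Sym_R(M)$ for the ``moreover'' clause. One small point worth noting (shared with the paper): in the degenerate case $\prod_\eta\mQ_{A^{\eta}_\circ,\{\wp\}}^\xi=0$, Lemma~\ref{squaremap} does not actually produce a square-root of $C_\chi$, so the claim that $\sqrt{C_\chi}\in\Q_{\chi,p}^\times$ exists is then only justified if $C_\chi$ happens to be a square; the displayed identity, however, holds trivially, as you correctly observe.
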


%%%%%%%%%%%%%%%%%%%%%%%%%%%%%%%%%%%%%%%%%%%%%%%%%%%%%%
% Algebraicity of plectic points
%%%%%%%%%%%%%%%%%%%%%%%%%%%%%%%%%%%%%%%%%%%%%%%%%%%%%%
\section{Algebraicity and arithmetic significance of plectic points}
We keep the same notation as in Section \ref{ArtinPlectic}.
In particular, we are in the polyquadratic CM setting and Assumptions \ref{AssHeegner2}, \ref{AssFields}, \ref{AssHeegner} and \ref{AssRamification} hold.
Moreover, we suppose that the conductor of the character $\xi$ is $c$. One can always achieve this by shrinking $c$.
We have the equalities
	\begin{equation}\label{star}
	r_{\bfcdot}(A/E,\chi)=\sum_{\eta\in\mathfrak{G}^\star}r_{\bfcdot}(A_\circ^\eta/E_\circ,\xi),\qquad\bfcdot\in\{\an,\alg\},
	\end{equation}
which imply $r_{\an}(A/E,\chi)\geq r$ because $r_{\an}(A_\circ^{\eta}/E_\circ,\xi)$ is odd for every character $\eta\in\mathfrak{G}^\star$.

\begin{theorem}\label{GZKabove}
 The following implication holds:
 \[
 \No_{S/\wp}\big(\pr^{-}_S(\mP^\chi_{A,S})\big)\neq 0\quad\implies\quad r_{\alg}(A/E,\chi)=r\quad\&\quad r_{\an}(A/E,\chi)=r.
 \]
Suppose $p$ does not split in $\Q_\chi$ and that the character $\chi$ is not quadratic, then
 \[
 r_{\an}(A/E,\chi)=r \quad\implies\quad \No_{S/\wp}\big(\pr^{-}_S(\mP^\chi_{A,S})\big)\neq 0\quad\&\quad r_{\alg}(A/E,\chi)=r.
 \]
\end{theorem}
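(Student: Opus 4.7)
The plan is to reduce both implications to the rank-one results for classical Heegner points packaged in Proposition \ref{Zhang}, by transferring between plectic points and plectic invariants via the Tate uniformization, and between data over $F$ and data over $F_\circ$ via Corollaries \ref{keyidentitysquare} and \ref{keyidentity}. The crucial preliminary observation will be that the norm map commutes with the componentwise Tate parametrization, yielding a commutative square
\[
\xymatrix{
\widehat{E}_{S,\otimes}^{-} \ar[r]^{\phi^{\Tate}_S} \ar[d]_{\No_{S/\wp}} & \widehat{A}(E_S) \ar[d]^{\No_{S/\wp}} \\
\Sym^r_{\Z_p}(\widehat{E}_{\circ,\wp}^{-}) \ar[r]_{\Sym^r(\phi^{\Tate}_\wp)} & \Sym^r_{\Z_p}(\widehat{A_\circ}(E_{\circ,\wp}))
}
\]
in which the bottom arrow remains injective after tensoring with $\Q_\chi$, thanks to the injectivity of $\phi^{\Tate}_\wp$ on $\widehat{E}^{-}_{\circ,\wp}$ recalled in Subsection \ref{conjectures}. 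Combined with \eqref{pinvariantscomparison}, this square will show that the non-vanishing of $\No_{S/\wp}(\pr^{-}_S(\mP^\chi_{A,S}))$ is equivalent to the non-vanishing of $\No_{S/\wp}(\mQ^\chi_{A,S})$.

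For the first implication I would assume $\No_{S/\wp}(\pr^{-}_S(\mP^\chi_{A,S}))\neq 0$, so that the diagram gives $\No_{S/\wp}(\mQ_{A,S}^\chi)\neq 0$, and Corollary \ref{keyidentitysquare} yields $\mQ^\xi_{A_\circ^\eta,\{\wp\}}\neq 0$ for every $\eta\in\mathfrak{G}^\star$. The relation analogous to \eqref{pinvariantscomparison} at the prime $\wp$ upgrades this to $\pr^{-}_\wp(\mP^\xi_{A_\circ^\eta,\{\wp\}})\neq 0$, hence $\mP^\xi_{A_\circ^\eta,\{\wp\}}\neq 0$, and Proposition \ref{Zhang} then produces $r_{\an}(A_\circ^\eta/E_\circ,\xi)=r_{\alg}(A_\circ^\eta/E_\circ,\xi)=1$ for every $\eta$. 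Summing via \eqref{star} concludes $r_{\an}(A/E,\chi)=r=r_{\alg}(A/E,\chi)$.

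For the converse I would assume $r_{\an}(A/E,\chi)=r$, with $\chi$ non-quadratic and $p$ non-split in $\Q_\chi$. Since each $r_{\an}(A_\circ^\eta/E_\circ,\xi)$ is an odd positive integer and these sum to $r=\lvert\mathfrak{G}^\star\rvert$ via \eqref{star}, every summand must equal $1$. Non-quadraticity of $\chi$ forces $\xi$ to be non-quadratic as well, so the second half of Proposition \ref{Zhang} gives $\mQ^\xi_{A_\circ^\eta,\{\wp\}}\neq 0$ for all $\eta$. Corollary \ref{keyidentity}, whose hypothesis is now in force, supplies $\No_{S/\wp}(\mQ^\chi_{A,S})\neq 0$, and the commutative square delivers $\No_{S/\wp}(\pr^{-}_S(\mP^\chi_{A,S}))\neq 0$. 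The algebraic rank assertion $r_{\alg}(A/E,\chi)=r$ follows once more from Proposition \ref{Zhang} combined with \eqref{star}.

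I expect the main technical obstacle to be the careful verification of the commutative square above and of the injectivity of $\Sym^r(\phi^{\Tate}_\wp)$ after inverting $p$; once this bridge between the plectic and classical settings is in place, the argument becomes a mechanical assembly of Proposition \ref{Zhang} with Corollaries \ref{keyidentitysquare} and \ref{keyidentity}, together with the parity observation that propagates non-vanishing through the factorization indexed by $\mathfrak{G}^\star$.
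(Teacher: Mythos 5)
Your proposal is correct and follows essentially the same route as the paper's proof: use the factorization results (Corollaries \ref{keyidentitysquare} and \ref{keyidentity}) to reduce the non-vanishing of the normed plectic point over $F$ to the non-vanishing of the classical plectic points (equivalently Heegner points) for each twist $A_\circ^\eta$ over $F_\circ$, then invoke Proposition \ref{Zhang} and sum via equation \eqref{star}. The one place where you are more explicit than the paper is the commutative square intertwining the norm maps with the Tate parametrizations, which is exactly what justifies passing from $\No_{S/\wp}(\pr^-_S(\mP^\chi_{A,S}))\neq 0$ to $\No_{S/\wp}(\mQ^\chi_{A,S})\neq 0$ and back; the paper applies Corollaries \ref{keyidentitysquare} and \ref{keyidentity} directly to the plectic points without spelling out this translation, implicitly relying on equation \eqref{pinvariantscomparison} and injectivity of $\phi^{\Tate}_S$ on $\widehat{E}^-_{S,\otimes}$. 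This is a useful clarification rather than a different argument.
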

\begin{proof}
The non-vanishing of $\No_{S/\wp}\big(\pr^{-}_S(\mP^\chi_{A,S})\big)$ implies that for every character $\eta\in\mathfrak{G}^\star$ the point $\pr^{-}_S(\mP^\xi_{A_\circ^{\eta},\{\wp\}})$ is non-zero by Corollary \ref{keyidentitysquare}. Hence, the point $\mP^\xi_{A^{\eta},\{\wp\}}$  is non-zero for every $\eta\in\mathfrak{G}^\star$ and, thus, 
Proposition \ref{Zhang} gives
\[
r_\alg(A_\circ^\eta/E_\circ,\xi)=r_\an(A_\circ^\eta/E_\circ,\xi)=1 \quad\forall\ \eta\in\mathfrak{G}^\star.
\]
The first claim follows from \eqref{star}.
The second claim is proved similarly using Corollary \ref{keyidentity}.
\end{proof}

\noindent Set $A^+_{/F}=A_{/F}$ and denote by $A^-_{/F}$ the quadratic twist of $A_{/F}$ with respect to the extension $E/F$.
We partition $S=S^+\cup S^-$ by declaring that the subset $S^+\subseteq S$ contains \emph{all} the primes in $S$ of split multiplicative reduction for $A_{/F}^+$, and define $\varrho_{A}(S):=\max\big\{r_{\alg}(A^\pm/F)+\lvert S^\pm\rvert\big\}.$
 
 \begin{theorem}
  We also have
 \[ 
 \No_{S/\wp}\big(\pr^{-}(\mP^\cf_{A,S})\big)\neq0\quad\iff\quad r_{\an}(A/E)=r\quad\&\quad\varrho_A(S)=r.
 \]
 \end{theorem}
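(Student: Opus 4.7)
The plan is to reduce the statement, via the factorization of plectic invariants in Section \ref{ArtinPlectic}, to the one-prime description in Remark \ref{signs}, and then match the resulting conditions with $\varrho_A(S)=r$.

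First, I would move from the plectic point down to single-prime plectic invariants. Since $\phi^{\Tate}_S$ is injective on $\widehat{E}^{-}_{S,\otimes}$ and commutes with $\No_{S/\wp}$ under the canonical identifications $E_\p=E_{\circ,\wp}$ for $\p\in S$, the non-vanishing of $\No_{S/\wp}(\pr^{-}_S(\mP^{\cf}_{A,S}))$ is equivalent to $\No_{S/\wp}(\mQ^{\cf}_{A,S})\neq 0$. For $\chi=\cf$ one has $\Q_\chi=\Q$, so the splitting hypothesis of Corollary \ref{keyidentity} is vacuous, and that corollary identifies the latter non-vanishing with $\mQ^{\cf}_{A^\eta_\circ,\{\wp\}}\neq 0$ for every $\eta\in\mathfrak{G}^\star$. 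Injectivity of $\phi^{\Tate}_\wp$ on $\widehat{E}^{-}_{\circ,\wp}$ then rewrites this as $\pr^{-}_\wp(\mP^{\cf}_{A^\eta_\circ,\{\wp\}})\neq 0$ for every $\eta$.

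Next, I would apply the single-prime results. By Proposition \ref{Zhang}, $\mP^{\cf}_{A^\eta_\circ,\{\wp\}}\neq 0$ is equivalent to $r_{\an}(A^\eta_\circ/E_\circ)=1$, and under this assumption Remark \ref{signs} characterises the non-vanishing of its minus part by the dichotomy: either $a_\wp=+1$ and $r_{\alg}(A^\eta_\circ/F_\circ)=0$, or $a_\wp=-1$ and $r_{\alg}(A^\eta_\circ/F_\circ)=1$. Since $\wp$ splits completely in $F/F_\circ$, the reduction type of $A^\eta_\circ$ at $\wp$ is independent of $\eta$, so $a_\wp$ is the common value of $a_\p$ for $\p\in S$; in particular either $|S^+|=r$ (when $a_\wp=+1$) or $|S^-|=r$ (when $a_\wp=-1$).

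It remains to identify these uniform-in-$\eta$ conditions with $r_{\an}(A/E)=r$ and $\varrho_A(S)=r$. From \eqref{star} together with Assumption \ref{AssHeegner}, the integers $r_{\an}(A^\eta_\circ/E_\circ)$ are positive and odd and sum to $r_{\an}(A/E)$, so $r_{\an}(A/E)=r$ iff every $r_{\an}(A^\eta_\circ/E_\circ)=1$; Proposition \ref{Zhang} then also forces $r_{\alg}(A^\eta_\circ/E_\circ)=1$ for every $\eta$. Because $\omega_{E/F}$ is the restriction of $\omega_{E_\circ/F_\circ}$ to $G_F$, one has $A^{-}\cong (A_\circ^{\omega_{E_\circ/F_\circ}})_F$, and base change along $F/F_\circ$ decomposes $r_{\alg}(A^\pm/F)=\sum_\eta r_{\alg}((A_\circ^\pm)^\eta/F_\circ)$, yielding $r_{\alg}(A/F)+r_{\alg}(A^-/F)=r_{\alg}(A/E)=r$. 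Writing $x=r_{\alg}(A/F)$ and $a=|S^+|$, a short computation shows $\varrho_A(S)=\max(x+a,\,2r-x-a)=r$ iff $x+a=r$. In the case $a_\wp=+1$ (so $a=r$) this means $r_{\alg}(A/F)=0$, i.e.\ all $r_{\alg}(A^\eta_\circ/F_\circ)=0$; in the case $a_\wp=-1$ (so $a=0$) it means $r_{\alg}(A/F)=r$, i.e.\ all $r_{\alg}(A^\eta_\circ/F_\circ)=1$, using that each such summand is bounded above by $r_{\alg}(A^\eta_\circ/E_\circ)=1$. Each is precisely the rank pattern demanded by Remark \ref{signs}, so both implications of the biconditional fall out.

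I do not anticipate a serious obstacle: the only real technical point is the bookkeeping that aligns the decomposition $r_{\alg}(A/F)+r_{\alg}(A^-/F)=r$ with the partition $|S^+|+|S^-|=r$, ensuring that the equality $\varrho_A(S)=r$ captures exactly the uniform-in-$\eta$ rank pattern required by Remark \ref{signs} once the common value of $a_\wp$ on $S$ is fixed.
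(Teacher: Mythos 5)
Your proposal is correct and follows essentially the same route as the paper's proof: reduce via Corollary~\ref{keyidentity} (valid for $\chi=\cf$ since $\Q_\cf=\Q$) to the single-prime minus parts, invoke Proposition~\ref{Zhang} and Remark~\ref{signs} at each $\eta\in\mathfrak{G}^\star$, then match the uniform-in-$\eta$ rank pattern with $\varrho_A(S)=r$ using $r_\alg(A/F)+r_\alg(A^-/F)=r_\alg(A/E)$ and the fact that $S$ is entirely $S^+$ or entirely $S^-$. The only cosmetic difference is that the paper invokes Theorem~\ref{GZKabove} to dispense quickly with the $r_\an(A/E)=r$ part of the forward implication, while you re-derive it directly from \eqref{star} and Proposition~\ref{Zhang}; the content is the same.
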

 \begin{proof}
Under our hypotheses we either have $S=S^+$ or $S=S^-$.
For simplicity we assume that $S=S^+$, or equivalently that $a_\wp=+1$, since the other case is proved by similar arguments.

\noindent Suppose $\No_{S/\wp}\big(\pr^{-}_S(\mP^\cf_{A,S})\big)$ is non-zero.
By Theorem \ref{GZKabove}, it is enough to show that $\varrho_S(A)=r$.
Now, Corollary \ref{keyidentity} implies that $\pr^{-}_{\wp}(\mP^\cf_{A_\circ^\eta,\wp})\neq 0$ for every $\eta\in\mathfrak{G}^\star$, and since $a_\wp=+1$,  Remark \ref{signs} tells us that $r_\alg(A_\circ^{\eta}/F_\circ)=0$ for every $\eta\in\mathfrak{G}^\star$.
We deduce $r_{\alg}(A^+/F)=0$ and $r_{\alg}(A^-/F)=r$ as required. For the converse implication, note that the assumptions imply that $r_{\alg}(A_\circ^\eta/E_\circ)=1$ and $r_{\alg}(A_\circ^\eta/F_\circ)=0$ for every $\eta\in\mathfrak{G}^\star$.
Then, Remark \ref{signs} shows that $\pr^{-}_{\wp}(\mP^\cf_{A_\circ^\eta,\wp})\neq 0$ for every $\eta\in\mathfrak{G}^\star$ and the claim follows from Corollary \ref{keyidentity}.
 \end{proof}

\noindent As in Subsection \ref{Heegner} we fix an embedding $\iota_\wp\colon E_{\circ,c}\into E_{\circ,\wp}$ extending the canonical embedding $E_\circ\into E_{\circ,\wp}$.
Furthermore, we choose an embedding $\iota_{\p_1}\colon E_{\mathfrak{c}} \into E_{\p_1}$ that restricts to the chosen embedding $\iota_\wp$ as well as to the canonical embedding $E\into E_{\p_1}$.
Under our assumptions, the fields $E$ and $E_{\circ,c}$ are linearly disjoint over $E_\circ$, thus for every $\p\in S$ we can choose an element $\tau_{\p}\in \Gal(E_\mathfrak{c}/E_{\circ,c})$ whose restriction to $\Gal(E/E_\circ)\cong\mathfrak{G}$ sends $\p$ to $\p_1$, and such that $\iota_{\p}:=\iota_{\p_1}\circ\tau_{\p}$ corresponds to the prime $\p$. 
Then, we define the determinant map $\det\colon\exterior{r}(A(E_\mathfrak{c})_\Q)\to \widehat{A}(E_{S})_\Q$ by setting
\[
\det\big(P_1\wedge\dots\wedge P_r\big)=\det \begin{pmatrix}
		\iota_{A,\mathfrak{p}_1}(P_1)&\dots& \iota_{A,\mathfrak{p}_r}(P_1)\\
	\vdots&\ddots&\vdots\\
	\iota_{A,\mathfrak{p}_{1}}(P_r)&\dots& \iota_{A,\mathfrak{p}_r}(P_r)
	\end{pmatrix}.
\]
For every $\eta\in\mathfrak{G}^\star$ we may view $A_\circ^\eta(E_{\circ,c})$ as a subgroup of $A(E_\mathfrak{c})$, and by construction we have
\begin{align}\label{Galoisactions}
\iota_{A,\p}(P)=\eta(\tau_{\p})\cdot \iota_{A_\circ^\eta,\wp}(P)\qquad \forall\ P\in A_\circ^\eta(E_{\circ,c}).
\end{align}

\begin{theorem}\label{Algebraicity}
 Suppose $p$ does not split in $\Q_\chi$.
 There exists a quadratic extension $\Omega_\chi/\Q_\chi$, in which $p$ splits, and an element $w^\chi_{A,S}\in \exterior{r} (A(E_\mathfrak{c})^\chi_{\Omega_\chi})$ such that
	\[
	\No_{S/\wp}\big(\pr^{-}_S(\mP^\chi_{A,S})\big)=\No_{S/\wp}\big(\pr^{-}_S(\det(w_{A,S}^\chi))\big)
	\]
	 in $\Sym^r_{\Z_p}(\widehat{A}(E_{\circ,\wp}))_{\Q_\chi}.$
\end{theorem}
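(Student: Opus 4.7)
The plan is to build $w^\chi_{A,S}$ as a rescaled wedge of the classical Heegner points $P^\xi_{A_\circ^\eta}$ attached to the quadratic twists $A_\circ^\eta$ for $\eta \in \mathfrak{G}^\star$, and to verify the identity by a direct computation combining three ingredients: the factorization of $p$-adic plectic invariants from Corollary \ref{keyidentity}, the Heegner-point description of local plectic invariants from equation \eqref{eqHeeg}, and the twist-equivariant embedding formula \eqref{Galoisactions}.

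First, I would apply $\Sym^r(\phi^{\Tate}_\wp)$ to the identity of Corollary \ref{keyidentity}. Using compatibility of $\No_{S/\wp}$ with the Tate uniformization together with the relation $\phi^{\Tate}_S(\mQ^\chi_{A,S}) = \pr^{-}_S(\mP^\chi_{A,S})$, this yields
\[
\No_{S/\wp}\bigl(\pr^{-}_S(\mP^\chi_{A,S})\bigr) \;=\; \sqrt{C_\chi}\cdot \prod_{\eta \in \mathfrak{G}^\star}\phi^{\Tate}_\wp\bigl(\mQ^\xi_{A_\circ^\eta,\{\wp\}}\bigr).
\]
Equation \eqref{eqHeeg} then rewrites each factor on the right as $(k^\xi_{A_\circ^\eta,\wp})^{-1}\cdot \pr^{-}_\wp\iota_{A_\circ^\eta,\wp}(P^\xi_{A_\circ^\eta})$, with $k^\xi_{A_\circ^\eta,\wp}\in\Q^\times$.

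Second, via the inclusion $A_\circ^\eta(E_{\circ,c})\hookrightarrow A(E_\mathfrak{c})$ coming from linear disjointness (Remark \ref{ExplanationAss}), each Heegner point $P^\xi_{A_\circ^\eta}$ lies in $A(E_\mathfrak{c})^\chi_{\Q_\chi}$: the fields of values $\Q_\xi$ and $\Q_\chi$ coincide since the map $\mathcal{G}_\mathfrak{c}\twoheadrightarrow G_c$ is surjective, and the $\chi$-equivariance follows because the $\mathcal{G}_\mathfrak{c}$-action on the embedded point factors through this quotient. Setting $\Omega_\chi := \Q_\chi(\sqrt{C_\chi})$, a quadratic extension in which $p$ splits because $\sqrt{C_\chi}\in\Q_{\chi,p}^\times$ by Corollary \ref{keyidentity}, I define
\[
w^\chi_{A,S}\;:=\;\frac{\sqrt{C_\chi}}{K_\chi\cdot D}\,\bigwedge_{\eta\in\mathfrak{G}^\star} P^\xi_{A_\circ^\eta} \;\in\; \exterior{r}\bigl(A(E_\mathfrak{c})^\chi_{\Omega_\chi}\bigr),
\]
where $K_\chi:= \prod_\eta k^\xi_{A_\circ^\eta,\wp}\in\Q^\times$ and $D := \det(\eta(\tau_{\p_i}))_{\eta,i}$. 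Then I compute $\No_{S/\wp}(\pr^{-}_S(\det(w^\chi_{A,S})))$ explicitly: substituting \eqref{Galoisactions} into each term of the plectic determinant factors out all twist scalars, and by \eqref{typeequality} the projector $\pr^{-}_S$ acts as $\otimes_\p \pr^{-}_\wp$. After applying $\No_{S/\wp}$, every permutation in the determinant sum contributes the same symmetric monomial $\prod_\eta \pr^{-}_\wp\iota_{A_\circ^\eta,\wp}(P^\xi_{A_\circ^\eta})$, weighted by character-matrix entries whose signed sum is precisely $D$. The normalisation cancels $D$ and produces exactly the expression for $\No_{S/\wp}(\pr^{-}_S(\mP^\chi_{A,S}))$ from the first step.

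The main technical point is the Hadamard-type non-vanishing $D\in\Q^\times$: because $\lvert S\rvert = \lvert\mathfrak{G}\rvert = r$ and the chosen elements $\tau_{\p_i}|_\mathfrak{G}$ enumerate $\mathfrak{G}$ by construction, the matrix $(\eta(\tau_{\p_i}|_\mathfrak{G}))_{\eta,i}$ is (up to reordering) the character table of the elementary abelian $2$-group $\mathfrak{G}\cong(\Z/2\Z)^t$, and orthogonality of characters forces $D = \pm r^{r/2}\neq 0$. Without this non-vanishing the plectic determinant would not factor cleanly as a rational multiple of the wedge of Heegner-point images, and the construction would fail.
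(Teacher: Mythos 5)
Your proposal is correct and is essentially the paper's proof: both build $w^\chi_{A,S}$ as a rescaled wedge of the Heegner points $P^\xi_{A_\circ^\eta}$, cancel the character-table determinant $D=\pm r^{r/2}$ via \eqref{Galoisactions}, and close the argument with Corollary \ref{keyidentity}. The only difference is organizational — you apply the Tate uniformization and the minus projector before computing the plectic determinant, while the paper computes $\No_{S/\wp}(\det(\widetilde{w}))$ first and projects at the end — which leads to the same identity.
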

\begin{proof}
We choose an ordering $\eta\{1\},\ldots,\eta\{r\}$ of the elements of the character group $\mathfrak{G}^\star$.
For $\eta\in\mathfrak{G}^\star$ we consider the Heegner point $P_{A^\eta_\circ}^\xi$ of equation \eqref{Heegnerpoint}, and set
\[
\widetilde{w}^\chi_{A,S}:=P_{A^{\eta\{1\}}_\circ}^\xi \wedge \ldots \wedge P_{A^{\eta\{r\}}_\circ}^\xi.
\]
Using \eqref{Galoisactions}, the formula for the determinant gives 
\begin{align*}
\No_{S/\wp}\big(\det(\widetilde{w}^\chi_{A,S})\big)
&= \No_{S/\wp}\left(\sum_{\sigma \in S_r} \mathrm{sgn}(\sigma)\cdot  \iota_{\p_1}\big(P_{A^{\eta\{\sigma(1)\}}_\circ}^\xi\big)\otimes\ldots\otimes \iota_{\p_r}\big(P_{A^{\eta\{\sigma(r)\}}_\circ}^\xi \big)\right)\\
&=C_{\mathfrak{G}} \cdot \prod_{\eta\in \mathfrak{G}^\star}  \iota_{A_\circ^\eta,\wp}\big(P_{A^\eta_\circ}^\xi\big),
\end{align*}
where
\[
C_{\mathfrak{G}}:=\sum_{\sigma \in S_r} \mathrm{sgn}(\sigma) \cdot \prod_{i=1}^{r} \eta\{\sigma(i)\}(\tau_{\p_{i}})
\]
is the determinant of the character table of the group $\mathfrak{G}$.
By orthogonality of characters, the determinant is non-zero.
In fact, it is equal to $\pm r^{r/2}$.
Let $k_{A_\circ^\eta,\wp}^\xi\in \Q^\times$ be the constants appearing in \eqref{comparison}, and $\sqrt{C_\chi}\in\Q_{\chi,p}^\times$ the constant appearing in Corollary \ref{keyidentity}.
By setting
	\[
	w^\chi_{A,S}:=\frac{\sqrt{C_\chi} }{C_{\mathfrak{G}} \cdot \prod_{\eta\in\mathfrak{G}^\star} k_{A_\circ^\eta,\wp}^\xi}\cdot \widetilde{w}^\chi_{A,S}
	\]
we get that
\[
\No_{S/\wp}\big(\det(w^\chi_{A,S})\big)= \sqrt{C_\chi} \cdot \prod_{\eta\in \mathfrak{G}^\star}  \mP_{A^\eta_\circ,\{\wp\}}^\xi .
\]
The claim follows from Corollary \ref{keyidentity} after applying the minus projector on both sides.
\end{proof}

%%%%%%%%%%%%%%%%%%%%%%%%%%%%%%%%%%%%%%%%%%%%%%%%%%%%%%
% BIBLIOGRAPHY
%%%%%%%%%%%%%%%%%%%%%%%%%%%%%%%%%%%%%%%%%%%%%%%%%%%%%%

\bibliography{Plectic}
\bibliographystyle{alpha}

\end{document}